\numberwithin{equation}{section}
\newtheorem{corollary}{Corollary}[section]
\newtheorem{lemma}[corollary]{Lemma}
\newtheorem{remark}[corollary]{Remark}
\newtheorem{theorem}[corollary]{Theorem}
\begin{document}
\date{}
\thispagestyle{empty}
\title{Second sound thermoelastic stability of a string/beam structure}

\author{Farhat Shel}
\address{LR Analysis and Control of PDEs, LR 22ES03, Department of Mathematics, Faculty of Sciences of Monastir, University of Monastir, Tunisia}

\email{farhat.shel@fsm.rnu.tn}

\begin{abstract}

In this paper we study the one dimensional thermoelastic transmission problem in a special string/beam structure: the two components are coupled at an interface (identified to $0$). Either the string or the beam is supposed thermoelastic, the heat flux is given by the Cattaneo's law instead of the usual Fourier's law. We prove that the the energy decay of the whole system is exponential if the string is thermoelastic. When only the beam is thermoelastic, we prove that the energy of the coupling string/beam decays polynomially to zero as $\frac{1}{t}$ and the decay rate can be, at most, polynomially stable of order $\frac{1}{t^2}$. 
\end{abstract}

\subjclass[2010]{35B35, 35B40, 35M33, 93D20}
\keywords{Transmission problem, thermoelasticity with second soud, exponential stability, polynomial stability, frequency approach}

\maketitle

\tableofcontents

\section{Introduction}

The simplest classical equations of heat conduction are given by
\begin{eqnarray}
  \theta_t+\gamma \nabla q&=&0, \label{heat}\\
  q+\kappa\nabla \theta&=&0\;\;\;\;\;\text{(Fourier's law)} \label{four}.
\end{eqnarray}

with $\gamma, \kappa>0$ and where $\theta$ and $q$ are respectively, the temperature difference and the heat flux vector. Combining the previous two equations yields the parabolic Fourier equation of thermal conduction
$$\theta_t-\gamma \kappa \Delta \theta=0$$ 
that predicts an infinite speed of heat propagation \cite{Cat58, Ver58} and leads to an unphysical phenomena when the temporal scale goes down far from normal \cite{ Ver58}. To overcome this contradiction of Fourier's law (\ref{four})  we consider here the Cattaneo's law  of thermal conduction \cite{Cat58, Ver58}:
\begin{equation}
\tau q_t+q+\kappa\nabla\theta=0 \label{catt}
\end{equation}
with $\tau>0,$ to obtain, by combining (\ref{heat}) and (\ref{catt}), the hyperbolic damped wave equation
\begin{equation}
\tau \theta_{tt}+\theta_t-\Delta\theta=0.
\end{equation}
This phenomenon is known as second sound effect, and it is observed at a very low temperature.

Over the past decades, many authors studied the asymptotic stability of thermoelastic system with second sound \cite{MeSa05, Rac02, Rac03b}. Often, as in the case of Fourier's law for heat conduction, some results of exponential stability are proved. But it is not always the case for the two laws. In \cite{SaRa09} the authors proved that the dissipative effects of heat conduction induced by Cattaneo's law are weaker than those induced by Fourier's law, proving that the Cattaneo's law may destroy the exponential stability obtained via Fourier's law. see also \cite{SaRi12}.

Transmission problem in thermoelasticity with Fourier's law coupling an elastic part and a thermoelastic part of a material such as string or beam was studied since about two decades. In \cite{Mar02, Rac08} the authors considered the case of strings and proved that the presence of heat conduction in a part of the elastic system, induces exponential stability of the whole system as in the case of a single thermoelastic string. See also \cite{She13} for a general case of tree. In \cite{Riv01} 
the authors considered a transmission problem coupling a thermoelastic beam and a purely elastic one and proved that, as a single thermoelastic beam, the whole system is exponentially stable.

When replacing Fourier's law by Cattaneo's law for the heat conduction, Racke and \textit{al}.\cite{SRR08} obtained the same result for the transmission problem in elastic-thermoelastic stings. See also \cite{MeWa15} for a nonlinear case and \cite{RiWa20} for some recent remarks on thermoelastic transmission problem. In the case of a coupling of elastic-thermoelastic beams, naturally, we don't hope to obtain a polynomial decay rate of the associated energy
 batter than $\frac{1}{t^2}$ obtained for a single thermoelastic one.

In our previous paper   \cite{She20}, we considered a string-beam structure. We have proved that the exponential stability still holds if at least the string is thermoelastic (with the Fourier's law) and polynomially stable when only the beam is supposed thermoelastic (with the Fourier's law) . 

In the present paper we deal with a string-beam coupling, one of the two component is damped by a heat conduction induced by Cattaneo's law. So we consider two PDEs' systems, the first represent a string $e_1$ of length $\ell_1$ and the second represent a beam $e_2$ of length $\ell_2$, coupled at the interface point $0.$ The heat conduction
 is coupled either with the wave equation or with the Petrowski equation. More precisely we consider the two initial boundary value problems: The first is described by equations (\ref{sc-b})-(\ref{btci}) below
\begin{equation}
\left\{
   \begin{aligned}
  u_{1,tt}-\alpha _{1}u_{1,xx}+\beta _{1}\theta _{1,x} &=&0\text{ in }(0,\ell
_{1})\times (0,\infty ),  \\
           \theta _{1,t}+\gamma _{1}q_{1,x}+\delta _{1}u_{1,tx} &=&0\text{ in }(0,\ell
_{1})\times (0,\infty ),  \\
\tau _{1}q_{1,t}+q_{1}+\kappa _{1}\theta _{1,x} &=&0\text{ in }(0,\ell
_{1})\times (0,\infty ),  \\           
          u_{2,tt}+\alpha _{2}u_{2,xxxx}&=&0\text{ in }(0,\ell _{2})\times (0,\infty ), 
           \end{aligned}
           \right. \label{sc-b}
\end{equation}
\begin{equation}
\left\{ 
\begin{tabular}{l}
$u_{1}(0,t)=u_{2}(0,t),\;\;\;\theta _{1}(0,t)=0,\;\;\;u_{2,x}(0,t)=0,$ \\ 
$ \alpha _{2}u_{2,xxx}(0,t) =\frac{\delta _{1}}{\beta _{1}} \alpha
_{1}u_{1,x}(0,t),$\\
$ u_{1}(\ell _{1},t)=0,\;\;\theta _{1}(\ell _{1},t)=0,u_{2}(\ell
_{2},t)=0,\;\;u_{2,xx}(\ell _{2},t)=0,$
\label{b2}
\end{tabular}
\right.
\end{equation}
\begin{equation}
\left\{ 
\begin{tabular}{l}
$
u_{j}(x,0)=u_{j}^0(x),\;u_{j,t}(x,0)=u_{j}^1(x),\;j=1,2,$\\
$\theta _{1}(x,0)=\theta
_{1}^0(x),\;q_{1}(x,0)=q_{1}^0(x).$  \label{btci}
\end{tabular}
\right.
\end{equation}
The second is described by equations (\ref{s-bc})-(\ref{btcii}) below
\begin{equation}
\left\{
   \begin{aligned}
   u_{1,tt}-\alpha _{1}u_{1,xx}&=&0\text{ in }(0,\ell _{1})\times (0,\infty ),\\
 u_{2,tt}+\alpha _{2}u_{2,xxxx}-\beta _{2}\theta _{2,xx} &=&0\text{ in }%
(0,\ell _{2})\times (0,\infty ), \\
\theta _{2,t}+\gamma _{2}q_{2,x}+\delta _{2}u_{2,txx} &=&0\text{ in }(0,\ell
_{2})\times (0,\infty ), \\
\tau _{2}q_{2,t}+q_{2}+\kappa _{2}\theta _{2,x} &=&0\text{ in }(0,\ell
_{2})\times (0,\infty ),\\ 
           \end{aligned}
           \right. \label{s-bc}
\end{equation}
\begin{equation}
\left\{ 
\begin{tabular}{l}
$u_{1}(0,t)=u_{2}(0,t),\;\;\;\theta _{2}(0,t)=0,\;\;\;u_{2,x}(0,t)=0,$ \\ 
$\frac{\delta _{2}}{\beta _{2}}\left( \alpha _{2}u_{2,xxx}(0,t)-\beta
_{2}\theta _{2,x}(0,t)\right) = \alpha
_{1}u_{1,x}(0,t),$\\
$ u_{1}(\ell _{1},t)=0,u_{2}(\ell
_{2},t)=0,\;\;\theta _{2}(\ell _{2},t)=0,\;\;u_{2,xx}(\ell _{2},t)=0,$
\label{b22}
\end{tabular}
\right.
\end{equation}
\begin{equation}
\left\{ 
\begin{tabular}{l}
$
u_{j}(x,0)=u_{j}^0(x),\;u_{j,t}(x,0)=u_{j}^1(x),\;j=1,2,$\\
$\theta _{2}(x,0)=\theta
_{2}^0(x),\;q_{2}(x,0)=q_{2}^0(x),$  \label{btcii}
\end{tabular}
\right.
\end{equation}
with positive constants $\alpha_j, \beta_j, \gamma_j, \delta_j, \tau_j, \kappa_j,\;\;j=1,2.$
The functions $
u_{j}=u_{j}(x,t),$ $\theta _{j}=\theta _{j}(x,t)$ and $q_{j}=q_{j}(x,t)$
 represent the displacement, the temperature
difference to a fixed reference temperature, and the heat flux  at time $t \in [0,\infty)$ and position $x \in(0,\ell_j)$, respectively,
 for $j=1,2.$ 

Note that that the cases $\tau_1=0$ and $\tau_2=0,$ that is assuming Fourier's law, correspond to the second and third systems studied in \cite{She20}.

The energy functions for the systems above are given respectively by
\begin{eqnarray*}
E_1(t) &=&\frac{1}{2}\int_{0}^{\ell _{1}}\left( \frac{\delta _{1}}{\beta _{1}}%
\left| u_{1,t}\right| ^{2}+\frac{\delta _{1}\alpha _{1}}{\beta _{1}}\left|
u_{1,x}\right| ^{2}+\left| \theta _{1}\right| ^{2}+\frac{\gamma _{1}\tau _{1}%
}{\kappa _{1}}\left| q_{1}\right| ^{2}\right) dx \\
&&+\frac{1}{2}\int_{0}^{\ell _{2}}\left( 
\left| u_{2,t}\right| ^{2}+\alpha _{2}\left|
u_{2,xx}\right| ^{2}\right) dx,\;\;\;\forall\,t\geq 0
\end{eqnarray*}
and
\begin{eqnarray*}
E_2(t) &=&\frac{1}{2}\int_{0}^{\ell _{1}}\left( 
\left| u_{1,t}\right| ^{2}+\alpha _{1}\left|
u_{1,x}\right| ^{2}\right) dx \\
&&+\frac{1}{2}\int_{0}^{\ell _{2}}\left( \frac{\delta _{2}}{\beta _{2}}%
\left| u_{2,t}\right| ^{2}+\frac{\delta _{2}\alpha _{2}}{\beta _{2}}\left|
u_{2,xx}\right| ^{2}+\left| \theta _{2}\right| ^{2}+\frac{\gamma _{2}\tau
_{2}}{\kappa _{2}}\left| q_{2}\right| ^{2}\right) dx,\;\;\;\forall\,t\geq 0.
\end{eqnarray*}
Their derivatives with respect to time $t,$ are given by 
\begin{equation*}
\frac{d}{dt}E_j(t) =-\frac{\gamma _{j}}{\kappa _{j}}\left\| q_{j}\right\| ^{2},\;\;\;j=1,2.
\end{equation*}
So the energies above are non-increasing. In this work we study the large-time behavior of the solution in the two cases. We prove that if the heat is effective in the string part, then the string-beam system is exponentially stable, while if the heat is only effective on the beam part, we don't hope obtain an exponential stability of the whole system, because we already know that a thermoelastic beam has an optimal (polynomial) decay rate of order $\frac{1}{t^2}$ and a purely elastic string is conservative. We will prove that the energy of the coupling string/thermoelastic beam decay polynomially to zero as $\frac{1}{t}$ and cannot exceed the order $2$.  

The remaining part of this paper is organized as follows. In section 2 we shall look at the coupling thermoelastic string-purely elastic beam described by (\ref{sc-b})-(\ref{btci}) and prove that it is exponentially stable. Section 3 is devoted to discussing the large time behavior of the coupling string-beam where only the beam is thermoelastic, described by (\ref{s-bc})-(\ref{btcii}). We show that the system is polynomially stable and non exponentially stable.

\section{String Cattaneo/Beam}

\subsection{Abstract setting}

First we will write the system (\ref{sc-b})-(\ref{btci}) as an evolution equation into an appropriate
Hilbert space, we establish the well-posedness of the system, then we estimate the stability of the corresponding semigroup.

Let us consider 
\begin{equation*}
V=\left\{ f=(f_{1},f_{2})\in H^{1}(0,\ell _{1})\times H^{2}(0,\ell _{2})\mid
f\text{ satisfies (\ref{s1'})}\right\}
\end{equation*}
where 
\begin{equation}
\left\{ 
\begin{tabular}{l}
$f_{1}(\ell _{1})=0,\;\;f_{2}(\ell _{2})=0,$ \\ 
$f_{1}(0)=f_{2}(0),$ \\ 
$\partial _{x}f_{2}(\ell _{2})=0.$%
\end{tabular}
\right.  \label{s1'}
\end{equation}

\noindent Define the Hilbert space $\mathcal{H}_{1}$ 
\begin{equation*}
\mathcal{H}_{1}=V\times \left(L^{2}(0,\ell_1)\times L^{2}(0,\ell_2)\right) \times L^{2}(0,\ell_1)\times L^{2}(0,\ell_1)
\end{equation*}
with norm given by 
\begin{equation*}
\left\| y\right\| _{\mathcal{H}_1}^2:=\frac{\delta _{1}\alpha _{1}}{\beta _{1}}%
\left\| \partial _{x}f_{1}\right\| ^{2}+\frac{\delta _{1}}{\beta
_{1}}\left\| g_{1}\right\| ^{2}+\left\| h_{1}\right\| ^{2}+\frac{\gamma
_{1}\tau _{1}}{\kappa _{1}}\left\| d_{1}\right\| ^{2}+\alpha_2\left\| \partial
_{x}^{2}f_{2}\right\| ^{2}+\left\| g_{1}\right\| ^{2},
\end{equation*}
where $z=\left( f,g,h_{1},d_{1}\right) $.

Now define the unbounded operator $\mathcal{A}_{1}$ on $\mathcal{H}_{1}$ by 
\begin{equation*}
\mathcal{D}(\mathcal{A}_{1})=\left\{ 
\begin{array}{c}
y=(u,v,\theta _{1},q_{1})\in V\cap (H^{2}(0,\ell _{1})\times H^{4}(0,\ell
_{2}))\times V\times H^{1}_0(0,\ell _{1})\times H^{1}(0,\ell _{1})\mid \\ 
\text{ and }y\text{ satisfies (\ref{s2''})}
\end{array}
\right\}
\end{equation*}
where 
\begin{equation}
\left\{ 
\begin{tabular}{l}
$\partial _{x}^{2}u_{2}(\ell _{2})=0,$ \\ 
$\alpha _{2}\partial _{x}^{3}u_{2}(0)=\alpha _{1}\partial _{x}u_{1}(0),$ 
\end{tabular}
\right. \label{s2''}
\end{equation}
and
\begin{equation*}
\mathcal{A}_{1}\left( 
\begin{array}{c}
u_{1} \\ 
u_{2} \\ 
v_{1} \\ 
v_{2} \\ 
\theta _{1} \\ 
q_{1}
\end{array}
\right) =\left( 
\begin{array}{c}
v_{1} \\ 
v_{2} \\ 
\alpha _{1}\partial _{x}^{2}u_{1}-\beta _{1}\partial _{x}\theta _{1} \\ 
-\alpha _{2}\partial _{x}^{4}u_{2} \\ 
-\gamma _{1}\partial _{x}v_{1}-\delta _{1}\partial _{x}q_{1} \\ 
-\frac{1}{\tau _{1}}q_{1}-\frac{\kappa _{1}}{\tau _{1}}\partial _{x}\theta
_{1}
\end{array}
\right) .
\end{equation*}

Setting  $y(t)=(u(.,t),u_t(.,t),\theta _{1}(.,t),q_{1}(.,t))$ and 
 $y^{0}=(u^{0},v^{0},\theta
_{1}^0,q_{1}^0)\in\mathcal{H}_1$, 

The system (\ref{sc-b})-(\ref{btci}) my be recast in $\mathcal{H}_1$ as
\begin{equation}
\left\{ 
\begin{array}{c}
\frac{dy}{dt}(t)=\mathcal{A}_{1}y(t),\;\;\;t>0, \\ 
y(0)=y^{0}.
\end{array}
\right.  \label{0500}
\end{equation}

We have the following result, 
\begin{theorem}[Well-posedness] \label{lem1}
The operator $\mathcal{A}_1$ generates a $\mathcal{C}_0$-semigroup of contractions $(S_1(t))_{t\geq 0}$, which is strongly stable:
$$\lim_{t\rightarrow \infty}\|S(t)y^0\|_{\mathcal{H}_1}=0,\;\;\;\forall\,y^0\in\mathcal{H}_1.$$
\end{theorem}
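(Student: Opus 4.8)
The plan is to derive the generation statement from the Lumer--Phillips theorem and the strong stability from the compactness of the resolvent together with the Arendt--Batty / Lyubich--V\~u theorem.

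\emph{Dissipativity and generation.} First I would check that $\mathcal{A}_1$ is dissipative. For $y=(u_1,u_2,v_1,v_2,\theta_1,q_1)\in\mathcal{D}(\mathcal{A}_1)$ one integrates by parts in each component of $\langle\mathcal{A}_1y,y\rangle_{\mathcal{H}_1}$: the boundary contributions produced by $\alpha_1\partial_x^2u_1$ and $-\alpha_2\partial_x^4u_2$ cancel at the interface thanks to $u_1(0)=u_2(0)$, $\partial_xu_2(0)=0$ and the transmission relation in \eqref{s2''}, those at the outer ends vanish because $u_1(\ell_1)=v_1(\ell_1)=u_2(\ell_2)=\partial_xu_2(\ell_2)=\partial_x^2u_2(\ell_2)=0$, and the cross terms between $v_1,\theta_1,q_1$ cancel in pairs after one integration by parts using $\theta_1(0)=\theta_1(\ell_1)=0$; what survives is $\mathrm{Re}\,\langle\mathcal{A}_1y,y\rangle_{\mathcal{H}_1}=-\tfrac{\gamma_1}{\kappa_1}\|q_1\|^2\leq0$, in agreement with the energy identity recalled in the introduction. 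Then I would solve the resolvent equation $(\lambda I-\mathcal{A}_1)y=F$: for $F\in\mathcal{H}_1$ the first two lines fix $v_1,v_2$, the last two lines express $q_1$ and $\theta_1$ by integration (the Dirichlet conditions $\theta_1(0)=\theta_1(\ell_1)=0$ fixing the free constant), and the remaining second/fourth order elliptic system for $(u_1,u_2)$ is handled by a variational formulation on $V$ whose bilinear form $\alpha_1\int\partial_xu_1\,\partial_x\bar\phi_1+\alpha_2\int\partial_x^2u_2\,\partial_x^2\bar\phi_2$ (plus lower-order terms when $\lambda>0$) is coercive by Poincar\'e's inequality, owing to the Dirichlet data at $x=\ell_1$ and $x=\ell_2$. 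Lax--Milgram produces a unique weak solution, elliptic regularity upgrades it to $u_1\in H^2$, $u_2\in H^4$, $\theta_1\in H^2\cap H^1_0$, and the relations \eqref{s2''} reappear as the natural boundary conditions of the weak problem. Taking $\lambda>0$ gives the range condition, so Lumer--Phillips yields the contraction $C_0$-semigroup $(S_1(t))_{t\geq0}$; taking $\lambda=0$ gives in addition $0\in\rho(\mathcal{A}_1)$.

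\emph{Strong stability.} Since $\mathcal{D}(\mathcal{A}_1)$ embeds compactly into $\mathcal{H}_1$ (the inclusion factors through $H^2(0,\ell_1)\times H^4(0,\ell_2)\times H^1(0,\ell_1)\times H^1(0,\ell_1)\hookrightarrow H^1(0,\ell_1)\times H^2(0,\ell_2)\times L^2(0,\ell_1)\times L^2(0,\ell_1)$, which is compact by Rellich's theorem), the operator $\mathcal{A}_1$ has compact resolvent, $\sigma(\mathcal{A}_1)$ consists of isolated eigenvalues of finite multiplicity, and it suffices to show that no eigenvalue lies on $i\mathbb R$. Suppose $\mathcal{A}_1y=i\beta y$ with $\beta\in\mathbb R$, $y\neq0$; the case $\beta=0$ would give $y\in\ker\mathcal{A}_1=\{0\}$ (as $0\in\rho(\mathcal{A}_1)$), so $\beta\neq0$. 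Taking real parts in $\langle\mathcal{A}_1y,y\rangle=i\beta\|y\|^2$ and using dissipativity forces $q_1\equiv0$; the sixth component of $\mathcal{A}_1y=i\beta y$ then gives $\partial_x\theta_1\equiv0$, hence $\theta_1\equiv0$ since $\theta_1(0)=0$; the fifth component gives $\partial_xv_1\equiv0$, and $v_1=i\beta u_1$ with $u_1(\ell_1)=0$ yields $u_1\equiv v_1\equiv0$. Thus the whole string part vanishes, $u_2(0)=u_1(0)=0$ and $\alpha_2\partial_x^3u_2(0)=\alpha_1\partial_xu_1(0)=0$, while the beam component solves $\alpha_2\partial_x^4u_2=\beta^2u_2$ with the interface data $u_2(0)=\partial_xu_2(0)=\partial_x^3u_2(0)=0$ and the end condition $u_2(\ell_2)=0$. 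With $\mu^4=\beta^2/\alpha_2>0$ the three conditions at $0$ force $u_2(x)=A\bigl(\cos(\mu x)-\cosh(\mu x)\bigr)$, and then $u_2(\ell_2)=0$ gives $A=0$ because $\cos(\mu\ell_2)-\cosh(\mu\ell_2)<0$ for $\mu\ell_2>0$; hence $u_2\equiv v_2\equiv0$ and $y=0$, a contradiction. Therefore $\sigma(\mathcal{A}_1)\cap i\mathbb R=\emptyset$, and the Arendt--Batty / Lyubich--V\~u theorem gives $\|S_1(t)y^0\|_{\mathcal{H}_1}\to0$ for all $y^0\in\mathcal{H}_1$.

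\emph{Main obstacle.} The genuinely delicate step is the absence of imaginary eigenvalues, and within it the transmission of information through the node: since the damping is felt only inside the string, one must use all three matching conditions at $0$ (namely $u_1(0)=u_2(0)$, $\partial_xu_2(0)=0$, and the transmission relation for $\partial_x^3u_2(0)$) to endow the undamped beam equation with enough Cauchy data at that endpoint for the reduction to a multiple of $\cos(\mu x)-\cosh(\mu x)$ to go through. Everything else is routine, although solving $(\lambda I-\mathcal{A}_1)y=F$ is somewhat lengthy because the stationary problem couples a second-order and a fourth-order operator across the interface.
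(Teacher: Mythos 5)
Your argument is correct and follows essentially the same route as the paper: dissipativity plus a range condition via Lumer--Phillips for generation, then compact resolvent, absence of eigenvalues on $\mathbf{i}\mathbb{R}$, and the Arendt--Batty theorem for strong stability. The only (harmless) variations are that you solve the resolvent equation variationally by Lax--Milgram where the paper integrates the ODE system explicitly and determines the constants from the boundary and transmission conditions, and that you make the final beam step explicit by reducing $u_2$ to a multiple of $\cos(\mu x)-\cosh(\mu x)$ and killing it with $u_2(\ell_2)=0$, a point the paper passes over more tersely.
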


\begin{proof}
The proof of the well-posedness is based on the Lumer-Philips theorem found e.g. in \cite{Paz83}; il suffices to prove that  $\mathcal{A}_1$ is dissipative and $0\in\rho(\mathcal{A}_1)$ the resolvent set of $\mathcal{A}_1$. The dissipativeness of $\mathcal{A}_1)$ is immediate, since $$Re(\left\langle \mathcal{A}_{1}y,y\right\rangle _{\mathcal{H}_{1}})=-\frac{%
\gamma _{1}}{\kappa _{1}}\left\| q_{1}\right\| ^{2}\leq 0,\;\;\;y=(u,v,\theta _{1},q_{1})\in \mathcal{D}(\mathcal{A}_1).$$  
Now, for a fixed $F=(f,g,h,d) \in \mathcal{H}_1$, let us consider the solvability of equation $\mathcal{A}_1y=F$, $y\in\mathcal{D}(\mathcal{A}_1)$, that is,
\begin{equation}
\left\{ 
\begin{tabular}{l}
$v_{1}=f_{1},$ \\
$\alpha _{1}\partial _{x}^{2}u_{1}-\beta _{1}\partial _{x}\theta _{1}=g_{1},$\\
$-\gamma _{1}\partial _{x}q_{1}=\delta _{1}\partial _{x}f_{1}+h_{1},$ \\
$-q_{1}-\kappa _{1}\partial _{x}\theta _{1}=\tau _{1}d_{1},$

\end{tabular}
\right.  \label{21}
\end{equation}
\begin{equation}
\left\{ 
\begin{tabular}{l}
$v_{2}=f_{2},$ \\
$-\alpha _{2}\partial _{x}^{4}u_{2}=g_{2},$%
\end{tabular}
\right.  \label{22}
\end{equation}
with boundary and transmission conditions
\begin{equation}
\theta _{1}(\ell _{1})=0,\;\;\theta_{1}(0)=0,\label{22'}
\end{equation}
\begin{equation}
u_{1}(\ell _{1})=0,\;\;\;u_{2}(\ell _{2})=0,\;\;\;\partial _{x}^{2}u_{2}(\ell _{2})=0,
\label{23}
\end{equation}
and
\begin{equation}
u_{1}(0)=u_{2}(0),\;\;\;\partial _{x}u_{2}(0)=0,\;\;\;\alpha _{2}\partial _{x}^{3}u_{2}(0)=\alpha _{1}\partial _{x}u_{1}(0).
\label{24}
\end{equation}

Integrating the third equation in (\ref{21}) from $0$ to $x$, we obtain
\begin{equation}
-\gamma _{1}q_{1}=\delta _{1}f_{1}+\int_{0}^{x}h_{1}(s)ds+
a_{1}.\label{25}
\end{equation}
Substituting (\ref{25}) into the fourth equation in (\ref{21}), then integrating by part the obtained identity from $0$ to $x$ yields
\begin{equation}
\gamma _{1}\kappa _{1}\theta _{1} =\underset{:=F_{1}(x)}{\underbrace{%
\int_{0}^{x}(\delta _{1}f_{1}(s)-\gamma _{1}\tau
_{1}d_{1}(s))ds+\int_{0}^{x}(x-s)h_{1}(s)ds}}+a_{1}x+b_{1},
\end{equation}
where $a_1,b_1$ are two constants.

Using that $\theta_1(0)=0$ and $\theta_1(\ell_1)=0$, we have $$b_1=0,\;\;\;a_1=-\frac{1}{\ell_1} F_1(\ell_1).$$
 Now, integrating the second equation in (\ref{21}) and the second
equations in (\ref{22}) several times, we get 
\begin{eqnarray*}
\alpha _{1}\partial _{x}u_{1}-\beta _{1}\theta _{1}
&=&\int_{0}^{x}g_{1}(s)ds+c_{1}, \\
\gamma _{1}\kappa _{1}\alpha _{1}u_{1} &=&\beta
_{1}\int_{0}^{x}F_{1}(s)ds+\gamma _{1}\kappa
_{1}\int_{0}^{x}(x-s)g_{1}(s))ds-\frac{\beta _{1}}{\ell_1}F_1(\ell_1)x^2
+\gamma _{1}\kappa _{1}c_{1}x+d_{1},
\end{eqnarray*}
and 
\begin{eqnarray*}
\alpha _{2}\partial _{x}^{3}u_{2}
&=&-\int_{0}^{x}g_{2}(s)ds+c_{2}, \\
\alpha _{2}\partial _{x}^{2}u_{2} &=&
-\int_{0}^{x}(x-s)g_{2}(s)ds+c_{2}x+d_2, \\
\alpha _{2}\partial _{x}u_{2}
&=&-\int_{0}^{x}\frac{(x-s)^2}{2} g_{2}(s)ds+\frac{1}{2}c_{2} x^{2}+d_{2}x+n_{2}, \\
\alpha _{2}u_{2} &=&-\int_{0}^{x}\frac{(x-s)^3}{3!} g_{2}(s)ds+\frac{1%
}{3!}c_{2}x^{3}+\frac{1%
}{2}d_{2}x^{2}+n_{2}x+m_{2}
\end{eqnarray*}
where $c_{1},$ $d_{1},$ $c_{2},$ $d_{2},n_{2}$ and $m_{2}$ are constants. 

The boundary and transmission conditions (\ref{23}) and (\ref{24}) are expressed as follow
\begin{eqnarray}
-\beta _{1}F_1(\ell_1)\ell _{1}
+\gamma _{1}\kappa _{1}c_{1}\ell _{1}+d_{1} &=&\Gamma_1 := -\beta
_{1}\int_{0}^{\ell_1}F_{1}(s)ds-\gamma _{1}\kappa
_{1}\int_{0}^{\ell_1}(x-s)g_{1}(s))ds  \label{s1} \\
\frac{1}{3!}c_{2} \ell
_{2}^{3}+\frac{1}{2}d_{2}\ell _{2}^{2}+n_{2}\ell _{2}+m_{2} &=&\Gamma_2:=\int_{0}^{\ell_2}\frac{(x-s)^3}{3!} g_{2}(s)ds,
\label{s2} \\
c_{2}\ell _{2}+d_{2} &=&\Gamma_3:= \int_{0}^{\ell_2}(x-s) g_{2}(s)ds,
\label{s3} \\
\alpha _{2}d_{1} &=&\gamma _{1}\kappa _{1}\alpha
_{1}m_{2}  \label{s4} \\
n_{2} &=&0  \label{s5} \\
c_{1} &=&c_{2},  \label{s6}
\end{eqnarray}
First, using (\ref{s6}), (\ref{s5}), (\ref{s4}) and (\ref{s3}) in (\ref{s2}) allows us to write $c_1$ in terms of $d_1$. Then from (\ref{s1}), $d_1$ is uniquely determined and consequently $c_1$ also. Finally, $c_2$, $d_2$ and $m_2$ are uniquely determined by (\ref{s6}), (\ref{s3}) and (\ref{s4}) respectively.

We conclude that the equation $\mathcal{A}_1y=F$ has a unique solution $y\in \mathcal{D}(\mathcal{A}_1).$ 
Moreover, from the regularity theory of linear elliptic systems \cite{Liu99}, there
exists a positive constant $C$ independent from $y$ and $z,$ such that 
\begin{equation*}
\left\| y\right\| _{\mathcal{H}_1}\leq C\left\| z\right\| _{\mathcal{H}_1}.
\end{equation*}
We conclude that $\mathcal{A}_1^{-1}\in \mathcal{L}(\mathcal{H}_1),$ that is $%
0\in \rho (\mathcal{A}_1).$
\end{proof}
\subsection{Strong stability}
We begin by the following lemma
\begin{lemma}\label{lem1'}
The operator $\mathcal{A}_1$ satisfies
\begin{equation}
\mathbf{i}\mathbb{R}\subset \rho (\mathcal{A}_1).  \label{2.1}
\end{equation}
\end{lemma}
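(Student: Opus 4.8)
The plan is to run the standard discreteness-of-spectrum argument. Theorem~\ref{lem1} already gives $0\in\rho(\mathcal{A}_1)$, and the resolvent $\mathcal{A}_1^{-1}$ is compact on $\mathcal{H}_1$: this is checked componentwise from the compact embeddings $H^2(0,\ell_1)\hookrightarrow H^1(0,\ell_1)$, $H^4(0,\ell_2)\hookrightarrow H^2(0,\ell_2)$ and $H^1\hookrightarrow L^2$ applied to $\mathcal{D}(\mathcal{A}_1)$. Hence $\sigma(\mathcal{A}_1)$ is a discrete set of eigenvalues of finite multiplicity, and $\mathbf{i}\mathbb{R}\subset\rho(\mathcal{A}_1)$ will follow once I show that $\mathbf{i}\lambda$ is not an eigenvalue of $\mathcal{A}_1$ for any real $\lambda\neq 0$ (the case $\lambda=0$ being covered by Theorem~\ref{lem1}).

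So I would argue by contradiction: suppose $\mathcal{A}_1 y=\mathbf{i}\lambda y$ for some $\lambda\in\mathbb{R}\setminus\{0\}$ and $0\neq y=(u,v,\theta_1,q_1)\in\mathcal{D}(\mathcal{A}_1)$. Taking the real part in $\langle \mathcal{A}_1 y,y\rangle_{\mathcal{H}_1}=\mathbf{i}\lambda\|y\|_{\mathcal{H}_1}^2$ and using the dissipation identity $\mathrm{Re}\langle\mathcal{A}_1 y,y\rangle_{\mathcal{H}_1}=-\frac{\gamma_1}{\kappa_1}\|q_1\|^2$ from the proof of Theorem~\ref{lem1} forces $q_1\equiv 0$. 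Feeding $q_1=0$ into the last line of the eigenvalue system gives $\partial_x\theta_1=0$, and the boundary condition $\theta_1(0)=0$ then yields $\theta_1\equiv 0$; the fifth line gives $\partial_x v_1=0$, and from $v_1=\mathbf{i}\lambda u_1$ together with $u_1(\ell_1)=0$ we get $v_1\equiv 0$ and hence, since $\lambda\neq 0$, $u_1\equiv 0$. In particular $\partial_x u_1(0)=0$.

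It then remains to show that the beam part vanishes. With $v_2=\mathbf{i}\lambda u_2$, the fourth line becomes the ODE $\alpha_2\partial_x^4 u_2=\lambda^2 u_2$ on $(0,\ell_2)$, subject to $u_2(0)=u_1(0)=0$, $\partial_x u_2(0)=0$, $\partial_x^3 u_2(0)=\frac{1}{\alpha_2}\alpha_1\partial_x u_1(0)=0$ by the transmission condition \eqref{s2''}, and $u_2(\ell_2)=0$. Writing $\mu=(\lambda^2/\alpha_2)^{1/4}>0$ and expressing the general solution as a combination of $\cosh\mu x$, $\sinh\mu x$, $\cos\mu x$, $\sin\mu x$, the three conditions at $x=0$ eliminate the $\sinh$ and $\sin$ modes and pin the $\cos$ coefficient to minus the $\cosh$ coefficient, leaving $u_2(x)=A(\cosh\mu x-\cos\mu x)$; then $u_2(\ell_2)=0$ together with $\cosh(\mu\ell_2)>1\ge\cos(\mu\ell_2)$ forces $A=0$. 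Thus $u_2\equiv v_2\equiv 0$, so $y=0$, contradicting $y\neq 0$.

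I expect the only point needing real care is this last ODE bookkeeping at the interface: one must verify that the three conditions $u_2(0)=\partial_x u_2(0)=\partial_x^3 u_2(0)=0$ are exactly the ones that collapse the four-dimensional solution space to the single mode $\cosh\mu x-\cos\mu x$, which $u_2(\ell_2)=0$ then annihilates via $\cosh>\cos$ (the extra condition $\partial_x^2 u_2(\ell_2)=0$ is not even needed). The verification that $\mathcal{A}_1^{-1}$ is compact and the computations leading to $q_1=\theta_1=u_1=v_1=0$ are entirely routine.
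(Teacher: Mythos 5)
Your proposal is correct and follows essentially the same route as the paper: compactness of $\mathcal{A}_1^{-1}$ reduces the claim to excluding purely imaginary eigenvalues, the dissipation identity forces $q_1=0$, which cascades to $\theta_1=v_1=u_1\equiv 0$ on the string, and the resulting fourth-order ODE for $u_2$ with the transmission/boundary data has only the trivial solution. The only difference is that you work out the beam ODE explicitly (combination of $\cosh$, $\sinh$, $\cos$, $\sin$ with $\cosh\mu\ell_2>\cos\mu\ell_2$), a step the paper states without detail.
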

\begin{proof}
First, by the Sobolev embedding theorem, $\mathcal{A}_1^{-1}$ is a compact
operator and then the spectrum of $\mathcal{A}_1$ consists of all isolated
eigenvalues, i.e. $\sigma (\mathcal{A}_1)=\sigma _{p}(\mathcal{A}_1)$, hence, we will prove that $\sigma _{p}(\mathcal{A}_1)\cap \mathbf{i}\mathbb{R}=\emptyset$. Otherwise,  there is a real number $\beta \neq 0,$ such that $%
\lambda :=i\beta $ is an eigenvalue of $\mathcal{A}_1.$ Let $y=(u,v,\theta_1 ,q_1)$
the corresponding eigenvector. We have 
\begin{equation}
\left\{ 
\begin{tabular}{l}
$
\begin{tabular}{l}
$v_{j}=\lambda u_{j}$ \ \ \ \ \ \ for $j$ in $\{1,2\},$ \\ 
$\alpha _{1}\partial _{x}^{2}u_{1}-\beta _{1}\partial _{x}\theta
_{1}=\lambda v_{1},$\\ $-\alpha _{1}\partial _{x}^{4}u_{2}=\lambda v_{2},$ \\ 
$-\delta _{1}\partial _{x}v_{1}-\gamma _{1}\partial
_{x}q_{1}=\lambda \theta _{1},$ \\ 
$-\frac{1}{\tau _{1}}q_{1}-\frac{\kappa _{1}}{\tau _{1}}\partial _{x}\theta
_{1}=\lambda q_{1}.$%
\end{tabular}
$%
\end{tabular}
\right.  \label{8.2}
\end{equation}

Taking the real part of the inner product of $\lambda y-\mathcal{A}_1%
y=0 $ with $y$ in $\mathcal{H}_1,$ we obtain 
\begin{equation*}
Re(\left\langle \mathcal{A}_1y,y\right\rangle _{\mathcal{H}_1})=-\frac{\gamma
_{1}}{\kappa _{1}}\left\| q_{1}\right\| ^{2}.
\end{equation*}

Thus $q_{1}=0$. Then, taking into account that $\theta_1,v_1,u_1 \in H^1_0(0,\ell_1)$, and using the fifth, fourth and first equations in (\ref{8.2}), it is easy to deduce that 
 $\theta_1=0$, $v_{1}=0$ and  $u_{1}=0$ on $[0,\ell_1]$. Immediately, by transmission and boundary conditions, we deduce that $\partial_x^ku_2(0)=0$ for $k=0,1,3$ and  $\partial_x^ku_2(\ell_2)=0$ for $k=0,2$. Hence, according to the third equation in (\ref{8.2}), we obtain that $u_2=0$ on $[0,\ell_2]$. 
\end{proof}

Thanks to  the strong stability criterion established in \cite{ArBa88} we have
\begin{theorem}[Strong stability]
The $\mathcal{C}_0$-semigroup $(S_1(t))_{t\geq 0}$ is strongly stable:
$$\lim_{t\rightarrow \infty}\|S(t)y^0\|_{\mathcal{H}_1}=0,\;\;\;\forall\,y^0\in\mathcal{H}_1.$$
\end{theorem}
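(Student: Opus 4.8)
The plan is to invoke the Arendt--Batty strong stability criterion (reference \cite{ArBa88}) in the standard way for $C_0$-semigroups of contractions on a Hilbert space. That criterion states that if $\mathcal{A}_1$ generates a bounded $C_0$-semigroup, $\sigma(\mathcal{A}_1)\cap \mathbf{i}\mathbb{R}$ is countable, and no eigenvalue of $\mathcal{A}_1^\ast$ lies on the imaginary axis, then the semigroup is strongly stable. In our setting the hypotheses are already essentially in hand: Theorem~\ref{lem1} gives that $(S_1(t))_{t\ge 0}$ is a contraction semigroup (hence bounded), and Lemma~\ref{lem1'} gives the much stronger statement $\mathbf{i}\mathbb{R}\subset\rho(\mathcal{A}_1)$, so $\sigma(\mathcal{A}_1)\cap\mathbf{i}\mathbb{R}=\emptyset$ is not merely countable but empty.

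Concretely, I would first recall from the proof of Theorem~\ref{lem1} that $0\in\rho(\mathcal{A}_1)$ and that $\mathcal{A}_1^{-1}$ is compact (the Sobolev embeddings $H^2\hookrightarrow L^2$, $H^4\hookrightarrow H^2$, etc., used in Lemma~\ref{lem1'}), so the spectrum is purely a discrete set of eigenvalues. Combined with Lemma~\ref{lem1'}, this yields $\sigma(\mathcal{A}_1)\cap\mathbf{i}\mathbb{R}=\emptyset$. Second, I would note that dissipativity of $\mathcal{A}_1$ together with $0\in\rho(\mathcal{A}_1)$ and reflexivity of $\mathcal{H}_1$ forces $\mathcal{A}_1^\ast$ to be dissipative as well, or alternatively observe directly that $\mathbf{i}\mathbb{R}\subset\rho(\mathcal{A}_1)$ implies $\mathbf{i}\mathbb{R}\subset\rho(\mathcal{A}_1^\ast)$ (since $\rho(\mathcal{A}_1^\ast)=\overline{\rho(\mathcal{A}_1)}$ and $\mathbf{i}\mathbb{R}$ is invariant under conjugation); hence $\mathcal{A}_1^\ast$ has no imaginary eigenvalues. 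Third, I would apply \cite{ArBa88} to conclude $\lim_{t\to\infty}\|S_1(t)y^0\|_{\mathcal{H}_1}=0$ for every $y^0\in\mathcal{H}_1$.

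There is really no serious obstacle here, since all the analytic work was done in establishing Lemma~\ref{lem1'} (the unique-continuation-type argument driving $q_1$, $\theta_1$, $v_1$, $u_1$, and then $u_2$ to zero). The only point requiring a line of care is the condition on $\mathcal{A}_1^\ast$, which is why I would explicitly record that $\mathbf{i}\mathbb{R}\subset\rho(\mathcal{A}_1)$ transfers to the adjoint; after that the citation closes the argument. I would also remark that, given $0\in\rho(\mathcal{A}_1)$ and compact resolvent, one could equally phrase the conclusion via the Lyubich--V\~u theorem, but the Arendt--Batty formulation is the cleanest given what the paper has already set up.
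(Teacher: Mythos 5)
Your proposal is correct and follows essentially the same route as the paper, which also deduces strong stability directly from the Arendt--Batty criterion of \cite{ArBa88} combined with the contraction property from Theorem~\ref{lem1} and the resolvent condition $\mathbf{i}\mathbb{R}\subset\rho(\mathcal{A}_1)$ of Lemma~\ref{lem1'}. Your extra remarks on the adjoint and the compactness of the resolvent are fine but not needed beyond what the paper already records.
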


\subsection{Exponential stability}

The aim of this section is to quantify the strong stability of $(S_1(t))_{t\geq 0}$ by showing that it is exponentially stable.
\begin{theorem}[Exponential stability]
The semigroup  $S_{1}(t)_{t\geq 0}$ is exponentially stable, i.e. there exists two positive constants $M$ and $\lambda$ such that
$$\|S_1(t)y^0\|_{\mathcal{H}_1}\leq Me^{-\lambda t}\|y^0\|_{\mathcal{H}_1},\;\;\;\forall\,y^0\in\mathcal{H}_1.$$
\end{theorem}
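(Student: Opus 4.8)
The plan is to use the frequency-domain characterization of exponential stability due to Gearhart, Huang, and Prüss: a $C_0$-semigroup of contractions $(S_1(t))_{t\ge 0}$ generated by $\mathcal A_1$ is exponentially stable if and only if $\mathbf i\mathbb R\subset\rho(\mathcal A_1)$ (already proved in Lemma \ref{lem1'}) and
\begin{equation*}
\limsup_{|\beta|\to\infty}\big\|(\mathbf i\beta I-\mathcal A_1)^{-1}\big\|_{\mathcal L(\mathcal H_1)}<\infty.
\end{equation*}
So the whole argument reduces to the uniform resolvent bound: I would argue by contradiction, assuming there exist sequences $\beta_n\in\mathbb R$ with $|\beta_n|\to\infty$ and $y_n=(u^n,v^n,\theta_1^n,q_1^n)\in\mathcal D(\mathcal A_1)$ with $\|y_n\|_{\mathcal H_1}=1$ such that $\|(\mathbf i\beta_n I-\mathcal A_1)y_n\|_{\mathcal H_1}=:\|F_n\|_{\mathcal H_1}\to 0$. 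Writing $F_n=(f_1^n,f_2^n,g_1^n,g_2^n,h_1^n,d_1^n)$, this gives the component equations $\mathbf i\beta_n u_j^n-v_j^n=f_j^n$, $\mathbf i\beta_n v_1^n-\alpha_1\partial_x^2 u_1^n+\beta_1\partial_x\theta_1^n=g_1^n$, $\mathbf i\beta_n v_2^n+\alpha_2\partial_x^4 u_2^n=g_2^n$, $\mathbf i\beta_n\theta_1^n+\gamma_1\partial_x v_1^n+\delta_1\partial_x q_1^n=h_1^n$, and $\mathbf i\beta_n q_1^n+\frac1{\tau_1}q_1^n+\frac{\kappa_1}{\tau_1}\partial_x\theta_1^n=d_1^n$, and I want to derive $\|y_n\|_{\mathcal H_1}\to 0$, a contradiction.

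The first and easiest gain: taking the real part of $\langle(\mathbf i\beta_n I-\mathcal A_1)y_n,y_n\rangle_{\mathcal H_1}$ and using the dissipativity identity yields $\frac{\gamma_1}{\kappa_1}\|q_1^n\|^2=\operatorname{Re}\langle F_n,y_n\rangle\to 0$, hence $q_1^n\to 0$ in $L^2(0,\ell_1)$. The core of the proof is then to propagate this damping in the heat flux back through the Cattaneo equation, the temperature equation, and finally the wave equation on the string, and then through the transmission conditions at $0$ into the beam. Concretely I would: (i) from the Cattaneo equation recover control of $\partial_x\theta_1^n$ — multiply that equation by suitable test functions and use $q_1^n\to 0$ together with $\beta_n q_1^n$ being controlled (which requires a bit of care, since one only knows $q_1^n\to0$, not $\beta_nq_1^n\to0$; one typically multiplies by $q_1^n$ or integrates against $\theta_1^n$ and uses $\theta_1^n(0)=\theta_1^n(\ell_1)=0$) to conclude $\theta_1^n\to 0$ in $H^1_0(0,\ell_1)$; (ii) feed this into the temperature equation $\mathbf i\beta_n\theta_1^n=-\gamma_1\partial_x v_1^n-\delta_1\partial_x q_1^n+h_1^n$ to get $\partial_x v_1^n$ small in a negative-order sense, and combine with the wave equation, using multipliers of the form $x\,\partial_x u_1^n$ (a Rellich-type / piecewise multiplier identity) to obtain $\|v_1^n\|\to 0$ and $\|\partial_x u_1^n\|\to 0$ — this is the standard observability step for a thermoelastic string and should go through because the string is the thermoelastic component; (iii) transfer the boundary traces: from step (ii) one controls $u_1^n(0)$, $\partial_x u_1^n(0)$, hence via the transmission conditions \re{s2''} the beam traces $u_2^n(0)$, $\partial_x^3 u_2^n(0)$, and from $\partial_x u_2^n(0)=0$, $\partial_x^2 u_2^n(\ell_2)=0$, $u_2^n(\ell_2)=0$ one has enough boundary data; (iv) finally apply a multiplier identity for the conservative Euler–Bernoulli equation $\mathbf i\beta_n v_2^n+\alpha_2\partial_x^4u_2^n=g_2^n$ with multiplier $x\,\partial_x u_2^n$ (or the standard beam multiplier) to absorb the interior energy $\|v_2^n\|^2+\alpha_2\|\partial_x^2 u_2^n\|^2$ in terms of boundary traces that have just been shown to vanish, giving $\|y_n\|_{\mathcal H_1}\to 0$.

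The main obstacle is step (i)–(ii): passing from the weak damping $q_1^n\to 0$ to full control of $\theta_1^n$ in $H^1$ and then of the mechanical energy of the string, because the Cattaneo (second-sound) coupling is genuinely weaker than Fourier coupling — one does not get $\partial_x\theta_1^n$ for free, and the frequency $\beta_n\to\infty$ multiplies terms one would like to discard. The way through is to use the structure of the Cattaneo law: solve the fifth equation for $\partial_x\theta_1^n=-\frac{\tau_1}{\kappa_1}(\mathbf i\beta_n+\frac1{\tau_1})q_1^n+\frac{\tau_1}{\kappa_1}d_1^n$, so $\partial_x\theta_1^n$ is controlled modulo the term $\beta_n q_1^n$; then pair the temperature equation with $\theta_1^n$ and the Cattaneo equation with $q_1^n$ and add, so that the problematic $\beta_n q_1^n\,\overline{\theta_1^n}$-type terms cancel against $\beta_n\theta_1^n\,\overline{q_1^n}$ terms (using that $\langle\mathbf i\beta_n\theta_1^n,\theta_1^n\rangle$ is purely imaginary), leaving a coercive bound on $\|\partial_x\theta_1^n\|$ and $\|v_1^n\|$ up to $o(1)$ plus a boundary term $|\theta_1^n\overline{v_1^n}|$ at $x=\ell_1$ which vanishes since $\theta_1^n\in H^1_0$. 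Once the string is handled this way, step (iv) for the beam is routine since the beam equation is conservative and the relevant boundary traces have been killed. I would also remark that the transmission condition $\alpha_2\partial_x^3u_2(0)=\alpha_1\partial_x u_1(0)$ plus $\partial_x u_2(0)=0$ is exactly what is needed to close the multiplier identity on the beam without leaving an uncontrolled trace.
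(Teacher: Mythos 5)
Your overall skeleton (Gearhart--Huang--Pr\"uss criterion, contradiction argument with $\|y_n\|_{\mathcal H_1}=1$, dissipativity giving $q_{1,n}\to 0$, then propagation to the string and across the interface to the beam) is the same as the paper's, but the two steps where all the work lies are not correct as you describe them. On the string: your proposed cancellation --- pairing the temperature equation with $\theta_{1,n}$ and the Cattaneo equation with $q_{1,n}$ and adding --- only reproduces the energy identity: after taking real parts the cross terms $\operatorname{Re}\langle \partial_x q_{1,n},\theta_{1,n}\rangle$ and $\operatorname{Re}\langle \partial_x\theta_{1,n},q_{1,n}\rangle$ cancel up to the vanishing boundary term, the $\mathbf i\beta_n$ terms are purely imaginary, and what remains is information you already have ($\|q_{1,n}\|\to0$); it gives no coercive control of $\|\partial_x\theta_{1,n}\|$ or $\|v_{1,n}\|$. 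Moreover your intermediate claim $\theta_{1,n}\to 0$ in $H^1_0$ is out of reach: from Cattaneo one only gets $\partial_x\theta_{1,n}=O(\beta_n q_{1,n})+o(1)$, i.e.\ $\partial_x\theta_{1,n}/\beta_n\to0$, not $\partial_x\theta_{1,n}\to0$. The paper avoids this entirely: it divides the temperature equation by $\mathbf i w_n$, multiplies the resulting identity by $\theta_{1,n}$ to get only $\|\theta_{1,n}\|\to 0$, and then pairs $\delta_1\partial_x u_{1,n}+\gamma_1(\mathbf i w_n)^{-1}\partial_x q_{1,n}=o(1)$ with $\partial_x u_{1,n}$, controlling the boundary traces by Gagliardo--Nirenberg interpolation ($\|q_{1,n}\|_\infty/\sqrt{w_n}=o(1)$, $\|\partial_x u_{1,n}\|_\infty/\sqrt{w_n}=O(1)$) to conclude $\|\partial_x u_{1,n}\|,\|v_{1,n}\|\to 0$. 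Some replacement for this interpolation/trace mechanism is needed; your outline does not supply one.

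On the beam: your closing remark that the transmission conditions are ``exactly what is needed to close the multiplier identity on the beam without leaving an uncontrolled trace'' is false, and this is precisely the second nontrivial point of the paper's proof. The multiplier $(\ell_2-x)\partial_x u_{2,n}$ (or $x\partial_x u_{2,n}$) leaves the interface trace $|\partial_x^2 u_{2,n}(0)|^2$, which is \emph{not} given by the transmission conditions (these control $u_2(0)$, $\partial_x u_2(0)=0$ and $\partial_x^3 u_2(0)$ only; $\partial_x^2u_2$ is prescribed only at $\ell_2$). The paper removes this trace by an additional argument: testing the beam equation against $w_n^{-1/2}e^{-\alpha_2^{-1/4}w_n^{1/2}x}$, which yields $w_n^{-1/2}\partial_x^3u_{2,n}(0)+\alpha_2^{-1/4}\partial_x^2u_{2,n}(0)=o(1)$ and hence $\partial_x^2u_{2,n}(0)=o(1)$ once $\partial_x u_{1,n}(0)\to0$ is known. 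Relatedly, you assert that step (ii) ``controls $u_1^n(0)$, $\partial_x u_1^n(0)$'', but $L^2$ smallness of $\partial_x u_{1,n}$ and $v_{1,n}$ does not give pointwise decay of $\partial_x u_{1,n}(0)$, nor the quantitative decay $w_n u_{1,n}(0)\to0$ that the beam identity actually requires (it contains $w_n^2|u_{2,n}(0)|^2$); the paper obtains these traces from separate boundary multiplier identities with $p_1(x)=\ell_1-x$ applied to all three string equations. Without these two ingredients your argument does not close.
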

\begin{proof}
We will use the characterization of the exponential stability of semigroups given in \cite{Gea78}, \cite{Hua85} or \cite{Pru84} (see also \cite{Liu99}). Precisely, following Theorem 1.3.2 in \cite{Liu99}, taking into account that $\mathbf{i}\mathbb{R}\subset \rho (\mathcal{A}_1)$ (Lemma \ref{lem1'}), it suffices to show the following resolvent estimate
\begin{equation}
\underset{\left| \beta \right| \rightarrow \infty }{\lim }\sup \left\| (%
\mathbf{i}\beta -\mathcal{B})^{-1}\right\| <\infty.  \label{2.2}
\end{equation}
Suppose the conclusion is
false. Then there exists a sequence $(w_{n})$ of real numbers, with $%
|w_{n}|\longrightarrow +\infty $ (without loss of generality, we suppose that $w_n>0$), and a sequence of vectors $%
(y_{n})=(u_{n},v_{n},\theta _{n},q_{n})$ in $\mathcal{D}(\mathcal{A}_1)$ with $%
\left\| y_{n}\right\| _{\mathcal{H}_1}=1$, such that 
\begin{equation*}
\left\| (\mathbf{i}w_{n}I-\mathcal{A}_1)y_{n}\right\| _{\mathcal{H}_1%
}\longrightarrow 0
\end{equation*}
which is equivalent to 
\begin{eqnarray}
\mathbf{i}w_{n}u_{1,n}-v_{1,n} &=&f_{1,n}\longrightarrow 0,\;\;\;\text{in}%
\;H^{1}(0,\ell _{1}),  \label{e11} \\
\mathbf{i}w_{n}v_{1,n}-\alpha _{1}\partial _{x}^{2}u_{1,n}+\beta
_{1}\partial _{x}\theta _{1,n} &=&g_{1,n}\longrightarrow 0,\;\;\;\text{in}%
\;L^{2}(0,\ell _{1}),  \label{e12} \\
\mathbf{i}w_{n}\theta _{1,n}+\delta _{1}\partial _{x}v_{1,n}+\gamma
_{1}\partial _{x}q_{1,n} &=&h_{1,n}\longrightarrow 0,\;\;\;\text{in}%
\;L^{2}(0,\ell _{1}),  \label{e13} \\
\mathbf{i}w_{n}\tau _{1}q_{1,n}+q_{1,n}+\kappa _{1}\partial _{x}\theta
_{1,n} &=&d_{1,n}\longrightarrow 0,\;\;\;\text{in}\;L^{2}(0,\ell _{1}),
\label{e14}
\end{eqnarray}
and 
\begin{eqnarray}
\mathbf{i}w_{2,n}u_{2,n}-v_{2,n} &=&f_{2,n}\longrightarrow 0,\;\;\;\text{in}%
\;H^{2}(0,\ell _{2}),  \label{e21} \\
\mathbf{i}w_{n}v_{2,n}+\alpha _{2}\partial _{x}^{4}u_{2,n}
&=&g_{2,n}\longrightarrow 0,\;\;\;\text{in}\;L^{2}(0,\ell _{2}).  \label{e24}
\end{eqnarray}
Then 
\begin{eqnarray}
w_{n}^{2}u_{1,n}+\alpha _{1}\partial _{x}^{2}u_{1,n}-\beta _{1}\partial
_{x}\theta _{1,n} &=&-g_{1,n}-\mathbf{i}w_{n}f_{1,n},  \label{e15} \\
\theta _{1,n}+\delta _{1}\partial _{x}u_{1,n}+\gamma
_{1}\frac{1}{\mathbf{i}w_{n}}\partial _{x}q_{1,n} &=&\frac{1}{\mathbf{i}w_{n}}(h_{1,n}+\delta _{1}f_{1,n}),  \label{e13'} \\
-w_{n}^{2}u_{2,n}+\alpha _{2}\partial _{x}^{4}u_{2,n} &=&g_{2,n}+\mathbf{i}%
w_{n}f_{2,n},  \label{e35}
\end{eqnarray}
and 
\begin{equation}  \label{e36}
\left\| v_{j,n}\right\| ^{2}-w_{n}^{2}\left\| u_{j,n}\right\|
^{2}\longrightarrow 0,\;\;j=1,2.
\end{equation}

First, since 
\begin{equation*}
Re(\left\langle (\mathbf{i}w_{n}-\mathcal{A}_1)y_{n},y_{n}\right\rangle _{%
\mathcal{H}_1})=-\frac{\gamma _{1}}{\kappa _{1}}\left\| q_{1,n}\right\| ^{2},
\end{equation*}
we obtain that $q_{1,n}$ converges to $0$ in $%
L^{2}(0,\ell _{1})$ and then by (\ref{e14}) we
deduce that $\frac{\partial _{x}\theta _{1,n}}{w_{n}}$  converges to zero in $L^{2}(0,\ell
_{1})$.

Now, multiplying (\ref{e13'}) by $\theta _{1,n}$ leads to 
\begin{equation*}
\left\| \theta _{1,n}\right\| ^{2}+\left. \frac{\gamma _{1}}{\mathbf{i}w_{n}}q_{1,n}%
\overline{\theta _{1,n}}\right| _{0}^{\ell _{1}}-\frac{\gamma _{1}}{\mathbf{i}w_{n}}%
\left\langle q_{1,n},\partial _{x}\theta _{1,n}\right\rangle +\delta
_{1}\left. u_{1,n}\overline{\theta _{1,n}}\right| _{0}^{\ell _{1}}-\delta
_{1}\left\langle u_{1,n},\partial _{x}\theta _{1,n}\right\rangle =o(1).
\end{equation*}

Taking into account that $\frac{\partial _{x}\theta _{1,n}}{w_{n}}$ and $q_{1,n}$ converge
to $0$,  that $w_{n}u_{1,n}$ is bounded (by \ref{e11}), and that $\theta_1(\ell_1)=\theta_1(0)=0$, we have 
\begin{equation*}
\left\| \theta _{1,n}\right\| ^{2}=o(1).
\end{equation*}

Then, (\ref{e13'}) becomes 
\begin{equation}
\delta _{1}\partial _{x}u_{1,n}+\gamma _{1}\frac{\partial _{x}q_{1,n}}{iw_{n}%
}=o(1).  \label{e1555}
\end{equation}
Multiplying (\ref{e1555}) by $\partial _{x}u_{1,n}$ and integrating by
parts, it follows 
\begin{equation*}
\delta _{1}\left\| \partial _{x}u_{1,n}\right\| ^{2}-\frac{\gamma _{1}}{%
iw_{n}}\left\langle q_{1,n},\partial _{x}^{2}u_{1,n}\right\rangle +\left. 
\frac{\gamma _{1}}{iw_{n}}q_{1,n}\partial _{x}\overline{u_{1,n}}\right|
_{0}^{\ell _{1}}\longrightarrow 0.
\end{equation*}
Since $\frac{\partial _{x}^{2}u_{1,n}}{w_{n}}$ is bounded (by equation (\ref{e15})) and $q_{1,n}\longrightarrow 0$ in $L^{2}(0,\ell _{1})$, one gets 
\begin{equation*}
\delta _{1}\left\| \partial _{x}u_{1,n}\right\| ^{2}+\left. \frac{\gamma _{1}%
}{iw_{n}}q_{1,n}\partial _{x}\overline{u_{1,n}}\right| _{0}^{\ell
_{1}}\longrightarrow 0.
\end{equation*}
Using Gagliardo-Nirenberg inequality \cite{Liu99}, taking into account the boundedness of $\frac{\partial _{x}^{2}u_{1,n}}{w_{n}}$, the boundedness of $\frac{1}{\mathbf{i}w_{n}}\partial _{x}q_{1,n}$ (by (\ref{e13'})),   we have
$$\frac{\left\| \partial _{x}u_{1,n}\right\| _{\infty }}{\sqrt{w_{n}}}\leq C\left( \left\| \partial _{x}u_{1,n}\right\|^{1/2}\frac{\left\| \partial _{x}^2u_{1,n}\right\|^{1/2}}{\sqrt{w_{n}}}+\frac{\left\| \partial _{x}u_{1,n}\right\|}{\sqrt{w_{n}}}\right)=O(1) $$  and
$$\frac{\left\| q_{1,n}\right\| _{\infty }}{\sqrt{w_{n}}}\leq C\left( \left\| q_{1,n}\right\|^{1/2}\frac{\left\| \partial _{x}q_{1,n}\right\|^{1/2}}{\sqrt{w_{n}}}+\frac{\left\| q_{1,n}\right\|}{\sqrt{w_{n}}}\right)=o(1) $$
for some positive constant $C$.\\
It yields, 
\begin{equation*}
\left\| \partial _{x}u_{1,n}\right\| ^{2}\longrightarrow 0,\;\;\text{in }%
L^{2}(0,\ell _{1}).
\end{equation*}
Hence, by (\ref{e11}), $\frac{\partial _{x}v_{1,n}}{w_{n}}\longrightarrow
0,\;$in $L^{2}(0,\ell _{1}).$ Thus, multiplying (\ref{e12}) by $\frac{1}{%
\mathbf{i}w_{n}}v_{1,n}$ to get 
\begin{equation*}
\left\| v_{1,n}\right\| ^{2}-\frac{\alpha _{1}}{iw_{n}}\left. \partial
_{x}u_{1,n}\overline{v_{1,n}}\right| _{0}^{\ell _{1}}\longrightarrow 0.
\end{equation*}
On one hand, recall that $\frac{\left\| \partial
_{x}u_{1,n}\right\| _{\infty }}{\sqrt{w_{n}}}$ converges to zero. On the other hand, to estimate $\frac{\left\| v_{1,n}\right\| _{\infty }}{%
\sqrt{w_{n}}}$  we apply again the Gagliardo-Nirenberg inequality,
$$\frac{\left\| v_{1,n}\right\| _{\infty }}{\sqrt{w_{n}}}\leq C\left( \left\| v_{1,n}\right\|^{1/2}\frac{\left\| \partial _{x}v_{1,n}\right\|^{1/2}}{\sqrt{w_{n}}}+\frac{\left\| v_{1,n}\right\|}{\sqrt{w_{n}}}\right)=o(1). $$
Then
\begin{equation*}
\left\| v_{1,n}\right\| ^{2}\longrightarrow 0.
\end{equation*}

In conclusion 
\begin{equation}
\left\| w_{n}u_{1,n}\right\| ,\;\left\| v_{1,n}\right\| ,\;\left\| \partial
_{x}u_{1,n}\right\| ,\;\left\| \theta _{1,n}\right\| \longrightarrow 0. \label{ee0}
\end{equation}

Now, let $p_1(x)=\ell_1-x$ and taking the real part of the inner product of (\ref{e15}), (\ref{e13'}) and (\ref{e14}) by $p_1\partial _{x}u_{1,n},$  $p_1\partial _{x}\theta_{1,n}$ and $p_1\frac{1}{\mathbf{i}w_n}\partial _{x}q_{1,n}$ respectively (in $L^2(0,\ell_1),$  we obtain (taking in mind boundary conditions),
\begin{eqnarray}
-\frac{\ell_1}{2}w_{n}^{2}\left| u_{1,n}(0)\right|
^{2}+\frac{1}{2}\int_{0}^{\ell _{1}}w_{n}^{2}\left| u_{1,n}\right|
^{2}dx
- \frac{\ell_1}{2}\alpha_1 \left| \partial _{x}u_{1,n}(0)\right| ^{2}+\frac{1}{2}\alpha_1\int_{0}^{\ell _{1}}\left| \partial _{x}u_{1,n}\right|
^{2}dx && \notag\\
-\beta_1 Re\left\langle \partial _{x}\theta_{1,n},p_1\partial _{x}u_{1,n}\right\rangle
-\ell_1Re\left( \textbf{i}w_np_1f_{1,n}(0)\overline{u_{2,n}}(0)\right)=\circ (1),\label{ee1}&&\\
\frac{1}{2}\int_{0}^{\ell _{1}}\left| \theta_{1,n}\right|
^{2}dx-Re\left(  \frac{1}{\textbf{i}w_n}\gamma_{1}\left\langle p_1\partial _{x}\theta_{1,n},\partial _{x}q_{1,n}\right\rangle\right)\notag && \\
+\delta_1 Re\left\langle \partial _{x}\theta_{1,n},p_1\partial _{x}u_{1,n}\right\rangle
=\circ (1) \label{ee2}&&
\end{eqnarray}
and
\begin{eqnarray}
&&-\frac{\ell_1}{2}\tau _1\left| q_{1,n}(0)\right| ^{2}+\frac{1}{2}\tau _1\int_{0}^{\ell _{1}}\left| q_{1,n}\right|
^{2}dx+Re \left( \frac{\kappa_1}{\textbf{i}w_n}\left\langle p_1\partial _{x}\theta_{1,n},\partial _{x}q_{1,n}\right\rangle\right)  
=\circ (1). \label{ee3}
\end{eqnarray}

Multiplying (\ref{ee1}) by $\delta_1$, (\ref{ee2}) by $\beta_1$ and (\ref{ee3}) by $\frac{\beta_1 \gamma_1}{\kappa_1}$, then summing, we get, by taking into account (\ref{ee0}) and that $q_{1,n}=o(1)$,
\begin{eqnarray}
\frac{1}{2} w_{n}^{2}\left| u_{1,n}(0)\right| ^{2}
+ \frac{1}{2}\alpha_1 \left| \partial _{x}u_{1,n}(0)\right| ^{2}&&\\
+Re\left( \textbf{i}w_nf_{1,n}(0)\overline{u_{2,n}}(0)\right)+
\frac{1}{2}\left| \theta_{1,n}(0)\right| ^{2}+
\frac{1}{2}\tau _1\left| q_{1,n}(0)\right| ^{2} 
=\circ (1).&& \label{ee4}
\end{eqnarray}
Using that $f_n(0)\longrightarrow 0$, we get from (\ref{ee4}),
\begin{equation}
w_{n}u_{1,n}(0),\;\partial _{x}u_{1,n}(0),\;\theta
_{1,n}(0),\;q_{1,n}(0)\longrightarrow 0.  \label{j1}
\end{equation}

Similarly, Taking the inner product of (\ref{e35}) by $(\ell_2-x)\partial _{x}u_{2,n}$ in $L^2(0,\ell_2),$  we obtain
\begin{eqnarray}
\frac{\ell_1}{2}w_{n}^{2}\left| u_{2,n}(0)\right|
^{2}-\frac{1}{2}\int_{0}^{\ell _{2}}w_{n}^{2}\left| u_{2,n}\right|
^{2}dx-\ell_2Re\left(
\alpha _{2}\partial _{x}^{3}u_{2,n}(0)\partial _{x}\overline{u_{2,n}}(0)\right)+ \frac{\ell_2}{2}\alpha_2\left| \partial _{x}^2u_{2,n}(0)\right| ^{2} \notag \\ 
-\frac{3}{2}\alpha_1\int_{0}^{\ell _{2}}\left| \partial _{x}^2u_{2,n}\right|
^{2}dx+\ell_2Re\left(\textbf{i}w_nf_{2,n}(0)\overline{u_{2,n}}(0)\right)=\circ (1),\label{ee12}
\end{eqnarray}

By taking into account (\ref{j1}) and transmission conditions at $x=0$,
\begin{eqnarray}
 \frac{1}{2}\alpha_2 \left| \partial _{x}^2u_{2,n}(0)\right| ^{2}-\frac{1}{2}\int_{0}^{\ell _{2}}w_{n}^{2}\left| u_{2,n}\right|
^{2}dx
-\frac{3}{2}\alpha_1\int_{0}^{\ell _{2}}\left| \partial _{x}^2u_{2,n}\right|
^{2}dx=\circ (1),\label{ee42}
\end{eqnarray}

Next we want prove that the two last terms in the left hand side of (\ref{ee42}) converge to zero as $n$ goes to infinite.

Let $a$ a real number that will be fixed later. The inner product of (\ref{e35}) with  $\frac{1}{%
w_{n}^{1/2}}e^{-aw_{n}^{1/2}x}$ gives
 
\begin{eqnarray*}
&&(\alpha _{2}a^{4}-1)w_{n}^{3/2}\left\langle
u_{2,n},e^{-aw_{n}^{1/2}x}\right\rangle+\frac{1}{w_{n}^{1/2}}\left[ \alpha _{2}
\partial _{x}^{3}u_{2,n} e^{-aw_{n}^{1/2}x}\right] _{0}^{\ell _{2}}\\
&&+\alpha
_{2}a\left[ e^{-aw_{n}^{1/2}x}\partial _{x}^{2}u_{2,n}\right] _{0}^{\ell
_{2}} 
+\alpha _{2}a^{2}w_{n}^{1/2}\left[ e^{-aw_{n}^{1/2}x}\partial _{x}u_{2,n}%
\right] _{0}^{\ell _{2}}+\alpha _{2}a^{3}w_{n}\left[
e^{-aw_{n}^{1/2}x}u_{2,n}\right] _{0}^{\ell _{2}}\\ 
&&  =o(1).
\end{eqnarray*}

We choose $a=\alpha _{2}^{-1/4}$, to obtain 
\begin{eqnarray*}
&&\frac{1}{w_{n}^{1/2}}\left[ \alpha _{2}
\partial _{x}^{3}u_{2,n} e^{-aw_{n}^{1/2}x}\right] _{0}^{\ell _{2}}+\alpha
_{2}a\left[ e^{-aw_{n}^{1/2}x}\partial _{x}^{2}u_{2,n}\right] _{0}^{\ell
_{2}}+\alpha _{2}a^{2}w_{n}^{1/2}\left[ e^{-aw_{n}^{1/2}x}\partial _{x}u_{2,n}%
\right] _{0}^{\ell _{2}}\\
&& 
+\alpha _{2}a^{3}w_{n}\left[
e^{-aw_{n}^{1/2}x}u_{2,n}\right] _{0}^{\ell _{2}}=o(1),
\end{eqnarray*}

then
\begin{equation*}
\frac{1}{w_{n}^{1/2}}\partial _{x}^{3}u_{2,n}(0)+\alpha _{2}^{-1/4}\partial
_{x}^{2}u_{2,n}(0)=o(1).
\end{equation*}
Hence, using the second condition in (\ref{s2''}) by taking into account (\ref{j1}), we get 
\begin{equation*}
\partial _{x}^{2}u_{2,n}(0)=o(1),
\end{equation*}
Return back to (\ref{ee42}), we deduce that the two expressions 
\begin{equation*}
\int_{0}^{\ell _{2}}w_{n}^{2}\left| u_{2,n}\right|
^{2}dx\;\;\text{and}\;\;\int_{0}^{\ell _{2}}\left| \partial _{x}^{2}u_{2,n}\right|
^{2}dx,
\end{equation*}
tend to zero as $n$ goes to infinite.  

Then we conclude that $\left\| y_{n}\right\| \rightarrow 0$ which contradict
the fact that $\left\| y_{n}\right\| =1$ and the proof is then complete.
\end{proof}
\begin{remark}
Note that if the heat damping is present on both the two components then one can check that the coupled system is exponentially stable.
\end{remark}

\section{String/Beam Cattaneo}

In this section we consider the case of a purely elastic string coupled with a thermoelastic beam. We prove that the energy of the whole system is $\frac{1}{t}$-polynomially stable and not $\frac{1}{t^2}$-polynomially stable.
\subsection{Well-posedness and strong stability}
\noindent Define the Hilbert space $\mathcal{H}_{2}$ 
\begin{equation*}
\mathcal{H}_{2}=V\times L^{2}(\mathcal{G})\times L^{2}(0,\ell _{2})\times
L^{2}(0,\ell _{2})
\end{equation*}
with 
norm given by 
\begin{equation*}
\left\| y\right\| _{\mathcal{H}_2}:=\left\| \partial _{x}f_{1}\right\|
^{2}+\left\| g_{1}\right\| ^{2}+\frac{\delta _{2}}{\beta _{2}}\left( \alpha
_{2}\left\| \partial _{x}^{2}f_{2}\right\| ^{2}+\left\| g_{2}\right\|
^{2}\right) +\left\| h_{2}\right\| ^{2}+\frac{\gamma _{2}\tau _{2}}{\kappa
_{2}}\left\| d_{2}\right\| ^{2},
\end{equation*}
where $z=\left( f,g,h_{2},d_{2}\right) .$

Now define the operator $\mathcal{A}_{2}$ on $\mathcal{H}_{2}$ by 
\begin{equation*}
\mathcal{D}(\mathcal{A}_{2})=\left\{ 
\begin{array}{c}
y=(u,v,\theta _{2},q_{2})\in \mathcal{H}_2 \mid v\in V,\;\theta_2\in H^1_0(0,\ell_2),\;q\in H^1(0,\ell_2),\\-\alpha _{2}\partial _{x}^{2}u_{2}+\beta _{2}\theta _{2}\in H^{2}(0,\ell _{2})
\;\text{ and }\;y\text{ satisfies \;(\ref{sbc2})}
\end{array}
\right\}
\end{equation*}
where 
\begin{equation}
\left\{ 
\begin{tabular}{l}
$\partial _{x}^{2}u_{2}(\ell _{2})=0,$ \\ 
$\frac{\delta _{2}}{\beta _{2}}\partial_x\left( \alpha _{2}\partial
_{x}^{2}u_{2}-\beta _{2}\theta _{2}\right)(0) =\alpha
_{1}\partial _{x}u_{1}(0)$
\end{tabular}
\right.  \label{sbc2}
\end{equation}
and

\begin{equation*}
\mathcal{A}_{2}\left( 
\begin{array}{c}
u_{1} \\ 
u_{2} \\ 
v_{1} \\ 
v_{2} \\ 
\theta _{2} \\ 
q_{2}
\end{array}
\right) =\left( 
\begin{array}{c}
v_{1} \\ 
v_{2} \\ 
\alpha _{1}\partial _{x}^{2}u_{1} \\ 
\partial_x^2\left(-\alpha _{2}\partial _{x}^{2}u_{2}+\beta _{2}\theta _{2}\right) \\ 
-\delta _{2}\partial _{xx}v_{2}-\gamma _{2}\partial _{x}q_{2} \\ 
-\frac{1}{\tau _{2}}q_{2}-\frac{\kappa _{2}}{\tau _{2}}\partial _{x}\theta
_{2}
\end{array}
\right) .
\end{equation*}

\noindent Then, putting $y=(u,u_{t},\theta_2 ,q_2),$ we write the system (\ref{s-bc})-(\ref{btcii}) into the following first order evolution equation 
\begin{equation}
\left\{ 
\begin{array}{c}
\frac{d}{dt}y=\mathcal{A}_{2}y, \\ 
y(0)=y^{0}
\end{array}
\right.  \label{sbc3}
\end{equation}
on the energy space $\mathcal{H}_{2}$, where $y^{0}=(u^{0},v^{0},\theta_2
^{0},q_2^{0}).$

For any $y=(u,v,\theta_2,q_2)\in \mathcal{D}(\mathcal{A}_2),$ a direct calculation yields \begin{equation*}
Re(\left\langle \mathcal{A}_2y,y\right\rangle _{\mathcal{H}_2})=-\frac{\gamma
_{1}}{\kappa _{2}}\left\| q_{2}\right\| ^{2} \leq 0.
\end{equation*}
Hence, $\mathcal{A}_2$ is dissipative. Moreover, $0\in \rho(\mathcal{A}_2)$ and
\begin{equation}
\mathbf{i}\mathbb{R}\subset \rho (\mathcal{A}_2). \label{2.2'}
\end{equation}
Then, as in the first case, we have the following result 
\begin{theorem}
The operator $\mathcal{A}_{2}$ is the infinitesimal generator of a $\mathcal{%
C}_{0}$-semigroup of contraction $(S_{2}(t))_{}t\geq 0$ which is  strongly stable.
\end{theorem}

\subsection{Polynomial stability}
In the first part of this section we prove that 
\begin{theorem}
The system $(\mathcal{S}_2)$ is polynomially stable.	More precisely,  there exists $c>0$ such that for all $y^0 \in \mathcal{D}(\mathcal{A}_2), $
\begin{equation*}
\left\| S_2(t)y^{0}\right\|_{\mathcal{H}_2} \leq \frac{c}{(1+t)^{1/2}}\left\| y^{0}\right\| _{%
\mathcal{D}(\mathcal{A}_2)},\;\; \forall t>0,
\end{equation*}
\end{theorem}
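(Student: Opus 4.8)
The plan is to use the Borichev--Tomilov theorem (see e.g. \cite{Liu99} or the original reference), which characterizes polynomial decay of order $t^{-1/\ell}$ for a bounded $\mathcal{C}_0$-semigroup generated by $\mathcal{A}_2$ with $\mathbf{i}\mathbb{R}\subset\rho(\mathcal{A}_2)$ (already established in \eqref{2.2'}) in terms of the resolvent growth bound
\begin{equation*}
\limsup_{|\beta|\to\infty}\frac{1}{|\beta|^{\ell}}\left\|(\mathbf{i}\beta I-\mathcal{A}_2)^{-1}\right\|_{\mathcal{L}(\mathcal{H}_2)}<\infty.
\end{equation*}
Here we aim for $\ell=2$, which yields exactly the rate $t^{-1/2}$ claimed in the theorem. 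First I would set up the standard contradiction argument: suppose the bound fails with $\ell=2$, so there exist $w_n\to+\infty$ and $y_n=(u_n,v_n,\theta_{2,n},q_{2,n})\in\mathcal{D}(\mathcal{A}_2)$ with $\|y_n\|_{\mathcal{H}_2}=1$ and $w_n^2(\mathbf{i}w_nI-\mathcal{A}_2)y_n\to 0$ in $\mathcal{H}_2$. Writing this out componentwise gives the six asymptotic equations analogous to \eqref{e11}--\eqref{e24}, but now with right-hand sides that are $o(w_n^{-2})$ in the respective spaces; in particular $f_{1,n},g_{1,n},f_{2,n},g_{2,n},h_{2,n},d_{2,n}$ all tend to zero after division by $w_n^2$.

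The dissipation identity $\mathrm{Re}\langle(\mathbf{i}w_n-\mathcal{A}_2)y_n,y_n\rangle_{\mathcal{H}_2}=-\frac{\gamma_2}{\kappa_2}\|q_{2,n}\|^2$ combined with the decay of $(\mathbf{i}w_n-\mathcal{A}_2)y_n$ gives $\|q_{2,n}\|=o(w_n^{-1})$, hence from the Cattaneo equation $\|\partial_x\theta_{2,n}\|=o(1)$ and then, using $\theta_{2,n}\in H^1_0(0,\ell_2)$, $\|\theta_{2,n}\|=o(1)$. The next steps mirror the exponential-stability proof of Section 2 but keep careful track of powers of $w_n$: I would multiply the heat equation (the analogue of \eqref{e13'}) by suitable test functions to extract $\|\partial_x^2 u_{2,n}\|$, then use multipliers of the form $p_2(x)\,\partial_x u_{2,n}$ with $p_2(x)=x$ or $\ell_2-x$ on the beam equation \eqref{e35}-analogue, and $p_1(x)=\ell_1-x$ on the string equation, together with the transmission conditions \eqref{sbc2} at $x=0$, to propagate the decay from the thermoelastic beam into the purely elastic string. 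The outcome should be that all the energy terms $\|\partial_x u_{1,n}\|$, $\|v_{1,n}\|$, $\|\partial_x^2 u_{2,n}\|$, $\|w_n u_{2,n}\|$, $\|\theta_{2,n}\|$, $\|q_{2,n}\|$ tend to zero, contradicting $\|y_n\|_{\mathcal{H}_2}=1$.

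The main obstacle is the string part. Since the string carries no damping and is coupled only through a single scalar transmission condition at $x=0$, controlling $\|\partial_x u_{1,n}\|$ and $\|v_{1,n}\|$ is delicate: one only gets boundary information at $x=0$ from the beam side, and the wave operator on $(0,\ell_1)$ does not by itself propagate an $L^2$ bound on the trace into an $L^2$ bound on the whole interval without losing powers of $w_n$. This is precisely why the decay is only polynomial of order $t^{-1/2}$ rather than exponential, and why the loss is $w_n^2$: roughly, recovering $u_{1,n}(0)$ and $\partial_x u_{1,n}(0)$ from the beam multiplier identity costs two powers of $w_n$ relative to the exponential case. I would carefully choose the exponential-type multiplier $w_n^{-1/2}e^{-a w_n^{1/2}x}$ on the beam (as in Section 2) to pin down $\partial_x^2 u_{2,n}(0)$ and $\partial_x^3 u_{2,n}(0)/w_n^{1/2}$, then feed these through the transmission condition to get $\partial_x u_{1,n}(0)=o(w_n^{-1})$ (say), and finally run the string multiplier $p_1\partial_x u_{1,n}$ to conclude. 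Getting the bookkeeping of these powers of $w_n$ exactly right so that the contradiction closes at the level $\ell=2$ (and no better) is the technical heart of the argument.
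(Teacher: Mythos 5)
Your strategy coincides with the paper's: Borichev--Tomilov with $\ell=2$ (legitimate here since $\mathbf{i}\mathbb{R}\subset\rho(\mathcal{A}_2)$), a contradiction sequence with $\|y_n\|_{\mathcal{H}_2}=1$ and $w_n^2(\mathbf{i}w_nI-\mathcal{A}_2)y_n\to0$, the dissipation identity giving $w_n\|q_{2,n}\|\to0$ and then $\|\partial_x\theta_{2,n}\|\to0$, beam multipliers of the form $p_2(x)\partial_xu_{2,n}$, the exponential multiplier $w_n^{-1/2}e^{-aw_n^{1/2}x}$, and a final string multiplier fed by the transmission conditions. However, what you set aside as ``bookkeeping'' is precisely the proof, and as written there is a genuine gap there: the boundary information at $x=0$ cannot be extracted in one pass. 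In the paper the multiplier identity with weight $w_n^{\beta}$ (Lemma \ref{lem7}) and the exponential-multiplier relation (Lemma \ref{lem8}) only tie the traces $w_n^{1+\beta}u_{2,n}(0)$, $w_n^{\beta}\partial_x^2u_{2,n}(0)$, $w_n^{\beta}q_{2,n}(0)$ and $w_n^{\beta-1/2}\bigl(\alpha_2\partial_x^3u_{2,n}(0)-\beta_2\partial_x\theta_{2,n}(0)\bigr)$ to weighted interior norms, so one must iterate: $\beta=0$ yields only $O(1)$ bounds, $\beta=1/2$ then gives $w_n^{1/2}\|\partial_x^2u_{2,n}\|^2,\,w_n^{1/2}\|\theta_{2,n}\|^2=o(1)$ and $w_n^{5/2}\|u_{2,n}\|^2=o(1)$, and only after a further pass with $\beta=1$ (absorbing the cross term $w_n\langle\partial_xu_{2,n},\partial_x\theta_{2,n}\rangle$ by Gagliardo--Nirenberg and a small-$\varepsilon$ Young inequality) do the traces decay. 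Your sketch contains no mechanism replacing this bootstrap, and it is exactly where the argument closes at $\ell=2$ and not before.

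Two specific points in your outline are also off. First, in this system the exponential multiplier does not pin down $\partial_x^3u_{2,n}(0)$ by itself: since the thermal coupling sits in the beam, the controlled quantity is the combination $\alpha_2\partial_x^3u_{2,n}(0)-\beta_2\partial_x\theta_{2,n}(0)$ --- exactly the quantity appearing in the transmission condition (\ref{sbc2}) --- with the choice $a=(\alpha_2+\beta_2\delta_2)^{-1/4}$ rather than $\alpha_2^{-1/4}$, and only modulo the terms $w_n^{1+\beta}u_{2,n}(0)$ and $w_n^{\beta}\partial_x^2u_{2,n}(0)$ in (\ref{eqlem8}), which must themselves be controlled through (\ref{ee22'}); this is why the two families of identities have to be run together. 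Second, your stated target $\partial_xu_{1,n}(0)=o(w_n^{-1})$ is neither needed nor attainable: because the string equation is used after division by $w_n^{2}$, the multiplier $(\ell_1-x)\partial_xu_{1,n}$ requires only $\partial_xu_{1,n}(0)=o(1)$ together with $w_nu_{1,n}(0)=o(1)$, which the paper obtains at the end of the bootstrap from $w_n^{3/2}u_{2,n}(0)=o(1)$ and $\alpha_2\partial_x^3u_{2,n}(0)-\beta_2\partial_x\theta_{2,n}(0)=o(1)$ via continuity of $u_n$ at $0$ and (\ref{sbc2}). So the proposal is a correct plan following the paper's route, but the quantitative core that produces the contradiction is missing.
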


\begin{proof}
As for exponential stability, there exists a characterization for a $\mathcal{C}_{0}$-semigroup of contraction to be polynomially stable: Theorem 2.4 in \cite{Tom10}. Taking into account that $\mathbf{i}\mathbb{R}\subset \rho (\mathcal{A}_2)$, The polynomial stability follows from the estimate
\begin{equation}
\limsup_{\beta \in \mathbb{R}, \left| \beta \right| \rightarrow +\infty} \frac{1}{\beta^2}\left\| (\mathbf{i}\beta I-A)^{-1}\right\| _{\mathcal{L}(X)}<\infty,  \label{st3}
\end{equation} 

Suppose the estimate (\ref{st3}) is false. Then there exists a sequence $(w_{n})$ of real numbers, with $%
|w_{n}|\longrightarrow +\infty$ (without loss of generality, we suppose that $w_n>0$), and a sequence of vectors $%
(y_{n})=(u_{n},v_{n},\theta _{2,n},q_{2,n})$ in $\mathcal{D}(\mathcal{A}_2)$ with $%
\left\| y_{n}\right\| _{\mathcal{H}_2}=1$, such that 
\begin{equation*}
\left\| w_{n}^{2}(\mathbf{i}w_{n}I-\mathcal{A}_2)y_{n}\right\| _{%
\mathcal{H}_2}\longrightarrow 0
\end{equation*}
 which is equivalent to 
\begin{eqnarray}
w_{n}^{2}\left( \mathbf{i}w_{n}u_{1,n}-v_{1,n}\right)
&=&f_{1,n}\longrightarrow 0,\;\;\;\text{in}\;H^{1}(0,\ell _{1}),  \label{sbc4}
\\
w_{n}^{2}\left( \mathbf{i}w_{n}v_{1,n}-\alpha _{1}\partial
_{x}^{2}u_{1,n}\right) &=&g_{1,n}\longrightarrow 0,\;\;\;\text{in}%
\;L^{2}(0,\ell _{1}),  \label{sbc5}
\end{eqnarray}
and 
\begin{eqnarray}
w_{n}^{2}\left( \mathbf{i}w_{2,n}u_{2,n}-v_{2,n}\right)
&=&f_{2,n}\longrightarrow 0,\;\;\;\text{in}\;H^{2}(0,\ell _{2}),  \label{sbc6}
\\
w_{n}^{2}\left( \mathbf{i}w_{n}v_{2,n}+\partial_x^2\left(\alpha _{2}\partial
_{x}^{2}u_{2,n}-\beta _{2}\theta _{2,n}\right)\right)
&=&g_{2,n}\longrightarrow 0,\;\;\;\text{in}\;L^{2}(0,\ell _{2}),  \label{sbc7}
\\
w_{n}^{2}\left( \mathbf{i}w_{n}\theta _{2,n}+\gamma _{2}\partial
_{x}^{2}v_{2,n}+\delta _{2}\partial _{x}q_{2,n}\right)
&=&h_{2,n}\longrightarrow 0,\;\;\;\text{in}\;L^{2}(0,\ell _{2}),  \label{sbc8}
\\
w_{n}^{2}\left( \mathbf{i}w_{n}\tau _{2}q_{2,n}+q_{2,n}+\kappa
_{2}\partial _{x}\theta _{2,n}\right) &=&d_{2,n}\longrightarrow 0,\;\;\;%
\text{in}\;L^{2}(0,\ell _{2}),  \label{sbc9}
\end{eqnarray}

We will prove that $y_n$ converges to zero in $\mathcal{H}_2,$ which contradict the fact that $%
\left\| y_{n}\right\| _{\mathcal{H}_2}=1.$

First, since 
\begin{equation*}
Re(\left\langle w_{n}^{2}(\mathbf{i}w_{n}-\mathcal{A}_2%
)y_{n},y_{n}\right\rangle _{\mathcal{H}_2})=-\frac{\gamma _{2}}{\kappa _{2}}%
w_{n}^{2}\left\| q_{2,n}\right\| ^{2},
\end{equation*}
we obtain that $w_{n}q_{2,n}$ converges to $0$ in $L^{2}(0,\ell
_{2})$, and then by (\ref{sbc9}), we deduce that $\partial
_{x}\theta _{2,n}$ converges to zero in $%
L^{2}(0,\ell _{2}).$ 

Now, from (\ref{sbc4})-(\ref{sbc9}) we deduce 
\begin{eqnarray}
w_{n}^{2}\left( w_{n}^{2}u_{1,n}+\alpha _{1}\partial
_{x}^{2}u_{1,n}\right) &=&-g_{1,n}-\mathbf{i}w_{n}f_{1,n},  \label{sbc10} \\
w_{n}^{2}\left( -w_{n}^{2}u_{2,n}+\partial_x^2\left(\alpha _{2}\partial
_{x}^{2}u_{2,n}-\beta _{2}\theta _{2,n}\right)\right) &=&g_{2,n}+%
\mathbf{i}w_{n}f_{2,n},  \label{sbc11} \\
w_{n}^{2}\left( \theta _{2,n}+\frac{1}{\mathbf{i}w_{n}}\gamma
_{2}\partial _{x}q_{2,n}+\delta _{2}\partial _{x}^{2}u_{2,n}\right) &=&\frac{%
1}{\mathbf{i}w_{n}}(h_{2,n}+\partial _{x}^{2}f_{2,n})  \label{sbc12}
\end{eqnarray}
and 
\begin{equation*}
w_{n}^{2}\left\| v_{j,n}\right\| ^{2}-w_{n}^{4}\left\| u_{j,n}\right\|
^{2}\longrightarrow 0,\;\;j=1,2.
\end{equation*}

We divide the rest of the proof into three steps:

\textbf{First step}: We prove some lemmas that will be used frequently later:

Let $p_2(x)=ax+b$, with ($a,b\in\mathbb{C}$), taking the real part of the inner product of (\ref{sbc11}), (\ref{sbc12}) and (\ref{sbc9}) by $p_2w_n^{\beta-2}\partial _{x}u_{2,n},$  $p_2w_n^{\beta-2}\partial _{x}\theta_{2,n}$ and $p_2w_n^{\beta-2}\frac{1}{\mathbf{i}w_n}\partial _{x}q_{2,n}$ respectively (in $L^2(0,\ell_2)$), where $\beta$ is a non negative real number such that $\beta \leq 1$,  we obtain
\begin{eqnarray}
-\left. \frac{1}{2} w_{n}^{2+\beta}\left| u_{2,n}(x)\right| ^{2}p_2(x)\right]_0^{\ell_2} +\frac{1}{2}\int_{0}^{\ell _{2}}w_{n}^{2+\beta}\left| u_{2,n}\right|
^{2}\partial_xp_2 dx-\frac{1}{2}\left. w_n^\beta\alpha_2 \left| \partial _{x}^2u_{2,n}(0)\right| ^{2}p_2(x)\right] _0^{\ell_2}\notag \\
+\left.  w_{n}^{\beta}\partial_x u_{2,n}(x)\left(\alpha _{2}\partial
_{x}^{3}u_{2,n}(x)-\beta _{2}\partial_x\theta _{2,n}(x) \right) p_2(x)\right]_0^{\ell_2} +\frac{3}{2}w_n^\beta\alpha_2\int_{0}^{\ell _{2}}\left| \partial _{x}^2u_{2,n}\right|
^{2}\partial_xp_2dx \notag \\ 
 -Re\left( w_n^\beta\beta _{2}\left\langle \theta _{2,n},\partial
_{x}^2u_{2,n}\partial_x p_2 \right\rangle+w_n^\beta\beta _{2}\left\langle \partial
_{x}\theta _{2,n},\partial _{x}^{2}u_{2,n}p_2\right\rangle 
\right)=\circ (1),\label{ee12}\\
\frac{1}{2}w_n^\beta\int_{0}^{\ell _{2}}\left| \theta_{2,n}\right|^{2}\partial_xp_2
dx- Re\left(\textbf{i} w_n^{\beta-1}\gamma_{2}\left\langle p_2\partial _{x}\theta_{2,n},\partial _{x}q_{2,n}\right\rangle\right)  \notag \\
-\delta_2 w_n^\beta Re\left\langle \partial _{x}\theta_{2,n},p_2\partial _{x}^2u_{2,n}\right\rangle
=\circ (1) \label{ee22}
\end{eqnarray}
and
\begin{eqnarray}
&&-\frac{1}{2}\left. w_n^\beta\tau _2 \left| q_{2,n}(x)\right| ^{2}p_2(x)\right] _0^{\ell_2}+ Re\left(\textbf{i} w_n^{\beta-1}\kappa_2\left\langle p_2\partial _{x}\theta_{2,n},\partial _{x}q_{2,n}\right\rangle\right)  
=\circ (1). \label{ee32}
\end{eqnarray}
We have used that $\theta_{2,n}(0)=\theta_{2,n}(\ell)=0$ and that $w_nq_n=o(1)$.

Multiplying (\ref{ee12}) by $\delta_2$, (\ref{ee22}) by $\beta_2$, and (\ref{ee32}) by $\frac{\beta_2 \gamma_2}{\kappa_2}$, then summing, we get, by taking $p_2(x)=\ell_2-x$,

\begin{lemma}\label{lem7}
\begin{eqnarray}
\frac{\ell_2}{2}\delta_2 w_{n}^{2+\beta}\left| u_{2,n}(0)\right| ^{2}+\frac{\ell_2}{2}w_n^\beta\alpha_2 \delta_2 \left| \partial _{x}^2u_{2,n}(0)\right| ^{2}+\frac{\ell_2}{2}\frac{\beta_2 \gamma_2}{\kappa_2}w_n^\beta\tau _2 \left| q_{2,n}(0)\right| ^{2}\notag\\
-\frac{1}{2}\delta_2\int_{0}^{\ell _{2}}w_{n}^{2+\beta}\left| u_{2,n}\right|
^{2}dx -\frac{3}{2}w_n^\beta\alpha_2 \delta_2\int_{0}^{\ell _{2}}\left| \partial _{x}^2u_{2,n}\right|
^{2}dx-\frac{1}{2}\beta_2 w_n^\beta\int_{0}^{\ell _{2}}\left| \theta_{2,n}\right|
^{2}dx \notag \\ 
 +Re \left( w_n^\beta\beta _{2} \delta_2\left\langle \theta _{2,n},\partial
_{x}^2u_{2,n}\right\rangle\right). 
=\circ (1), \label{ee22'}
\end{eqnarray}
\end{lemma}
The second lemma gives $\alpha _{2}\partial _{x}^{3}u_{2,n}(0)-\beta
_{2}\partial _{x}\theta _{2,n}(0)$ in terms of $\partial
_{x}^{2}u_{2,n}(0)$.
\begin{lemma}\label{lem8}
For every non negative $\beta \leq\frac{1}{2}$ there exists a positive number $a$ such that
\begin{equation}
\frac{w_n^\beta}{w_{n}^{1/2}}(\alpha _{2}\partial _{x}^{3}u_{2,n}(0)-\beta
_{2}\partial _{x}\theta _{2,n}(0))+w_n^\beta\alpha _{2}a\partial
_{x}^{2}u_{2,n}(0)+w_n^{1+\beta}(\alpha _{2}+\beta_2 \delta_2)a^{3} u_{2,n}(0)=o(1).\label{eqlem8}
\end{equation}
\end{lemma}

\begin{proof}
Let $a\in \mathbb{R}$. The inner product of $\alpha _{2}\partial _{x}^{4}u_{2,n}-\beta_2\partial _{x}^{2}\theta_{2,n}$ with $\frac{w_n^\beta}{%
w_{n}^{1/2}}e^{-aw_{n}^{1/2}x}$ gives
\begin{eqnarray*}
w_n^\beta\left\langle \alpha _{2}\partial _{x}^{2}\left( \partial _{x}^{2}u_{2,n}-\beta_2\theta_{2,n}\right) ,\frac{1}{w_{n}^{1/2}}%
e^{-aw_{n}^{1/2}x}\right\rangle =w_n^\beta\left( \frac{1}{w_{n}^{1/2}}\left[\left( \alpha _{2}
\partial _{x}^{3}u_{2,n}-\beta_2\partial _{x}^{2}\theta_{2,n}\right) e^{-aw_{n}^{1/2}x}\right] _{0}^{\ell _{2}}\right. \\
+\alpha
_{2}a\left[ e^{-aw_{n}^{1/2}x}\partial _{x}^{2}u_{2,n}\right] _{0}^{\ell
_{2}} 
+\alpha _{2}a^{2}w_{n}^{1/2}\left[ e^{-aw_{n}^{1/2}x}\partial _{x}u_{2,n}%
\right] _{0}^{\ell _{2}}+\alpha _{2}a^{3}w_{n}\left[
e^{-aw_{n}^{1/2}x}u_{2,n}\right] _{0}^{\ell _{2}}\\ 
\left. +\alpha
_{2}a^{4}w_{n}^{3/2}\left\langle
u_{2,n},e^{-aw_{n}^{1/2}x}\right\rangle -\beta
_{2}a\left[ e^{-aw_{n}^{1/2}x}\theta _{2,n}\right] _{0}^{\ell _{2}}-\beta
_{2}a^{2}w_{n}^{1/2}\left\langle \theta
_{2,n},e^{-aw_{n}^{1/2}x}\right\rangle\right). 
\end{eqnarray*}
The inner product of (\ref{sbc12}) by $w_{n}^{\frac{1}{2}+\beta-2}e^{-aw_{n}^{1/2}x}$ gives
\begin{eqnarray*}
&&-w_{n}^{\beta+1/2}\left\langle \theta _{2,n},e^{-aw_{n}^{1/2}x}\right\rangle \\
&=& -\mathbf{i} w_{n}^{\beta-1/2}\gamma _{2}\left\langle \partial
_{x}q_{2,n},e^{-aw_{n}^{1/2}x}\right\rangle +\delta
_{2}w_{n}^{\beta+1/2}\left\langle \partial
_{x}^{2}u_{2,n},e^{-aw_{n}^{1/2}x}\right\rangle -\frac{w_{n}^{\frac{1}{2}+\beta-2}}{\mathbf{i}w_{n}^{1/2}}%
\left\langle h_{2,n}+\partial _{x}^{2}f_{2,n},e^{-aw_{n}^{1/2}x}\right\rangle
\\
&=& -\mathbf{i}w_{n}^{\beta-1/2}\gamma _{2}\left[ e^{-aw_{n}^{1/2}x}q_{2,n}\right]
_{0}^{\ell _{2}}-\mathbf{i}\gamma _{2}w_n^\beta a\left\langle
q_{2,n},e^{-aw_{n}^{1/2}x}\right\rangle +\delta _{2}w_{n}^{1/2+\beta}\left[
e^{-aw_{n}^{1/2}x}\partial_xu_{2,n}\right] _{0}^{\ell _{2}}  \\
&+&\delta _{2}aw_{n}^{1+\beta}\left[
e^{-aw_{n}^{1/2}x}u_{2,n}\right] _{0}^{\ell _{2}}+\delta
_{2}a^{2}w_{n}^{\beta+3/2}\left\langle
u_{2,n},e^{-aw_{n}^{1/2}x}\right\rangle +o(1)
\end{eqnarray*}

It yields that the inner product of (\ref{sbc11}) with $
w_{n}^{-\frac{1}{2}+\beta-2}e^{-aw_{n}^{1/2}x}$ gives 
\begin{eqnarray*}
&&+((\alpha _{2}+\beta
_{2}\delta _{2})a^{4}-1)w_{n}^{3/2+\beta}\left\langle
u_{2,n},e^{-aw_{n}^{1/2}x}\right\rangle+\frac{w_n^\beta}{w_{n}^{1/2}}\left[\left( \alpha _{2}
\partial _{x}^{3}u_{2,n}-\beta_2\partial _{x}^{2}\theta_{2,n}\right) e^{-aw_{n}^{1/2}x}\right] _{0}^{\ell _{2}}\\
&&+w_n^\beta\alpha
_{2}a\left[ e^{-aw_{n}^{1/2}x}\partial _{x}^{2}u_{2,n}\right] _{0}^{\ell
_{2}} 
+w_n^{\beta+1/2}(\alpha _{2}+\beta_2 \delta_2)a^{2}\left[ e^{-aw_{n}^{1/2}x}\partial _{x}u_{2,n}%
\right] _{0}^{\ell _{2}}\\ 
&& +w_n^{1+\beta}(\alpha _{2}+\beta_2 \delta_2)a^{3}\left[
e^{-aw_{n}^{1/2}x}u_{2,n}\right] _{0}^{\ell _{2}}-w_n^\beta\beta
_{2}a\left[ e^{-aw_{n}^{1/2}x}\theta _{2,n}\right] _{0}^{\ell _{2}} =o(1)
\end{eqnarray*}
Then, (\ref{eqlem8}) is immediate by choosing  $a=\frac{1}{(\alpha _{2}+\beta _{2}\delta _{2})^{1/4}}$.
\end{proof}

Using the boundedness of $y_n$ in Lemma \ref{lem7} with $\beta=0$, we deduce that
\begin{lemma}
\begin{equation}
w_nu_{2,n}(0)=O(1)\;\;\;\text{and}\;\;\;\partial _{x}^{2}u_{2,n}(0)=O(1), \label{boun11}
\end{equation}
then, from Lemma \ref{lem8} we have
\begin{equation}
\frac{1}{w_n^{1/2}}\left( \alpha _{2}\partial
_{x}^{3}u_{2,n}(0)-\beta _{2}\partial _{x}\theta _{2,n}(0)\right) =O(1).\label{boun11'}
\end{equation}
\end{lemma}

\begin{lemma}
For every $0\leq\beta\leq 1$ we have
\begin{eqnarray}
w_{n}^{\beta}\left[ \left\| \theta_{2,n}\right\|
^{2}+\delta_{2}\left\| \partial _{x}^{2}u_{2,n}\right\|
^{2} -2\delta_{2}Re\left\langle \partial_{x} u_{2,n},\partial_{x}\theta_{2,n}\right\rangle\right]  =o(1) \label{sbc13''bis}
\end{eqnarray}
\end{lemma}
\begin{proof}
Multiplying (\ref{sbc12}) by $w_{n}^{\beta-2}\theta_{2,n}$ and $\delta_2w_{n}^{\beta-2}\partial
_{x}^{2}u_{2,n}$ respectively, we get
\begin{equation}
w_{n}^{\beta}\left\| \theta_{2,n}\right\|
^{2}+\mathbf{i} w_{n}^{\beta-1}\gamma_2\left\langle q_n,\partial_x\theta _{2,n}\right\rangle +\delta_{2}w_{n}^{\beta}\left\langle \partial^2_{x} u_{2,n},\theta_{2,n}\right\rangle =o(1)\label{sbc13"}
\end{equation}
and
\begin{eqnarray}
&&\delta_{2}^2w_{n}^{\beta}\left\| \partial _{x}^{2}u_{2,n}\right\|
^{2}+\delta_{2}w_{n}^{\beta}\left\langle \theta _{2,n},\partial
_{x}^{2}u_{2,n}\right\rangle -\mathbf{i}w_{n}^{\beta-1}\delta_{2}\gamma _{2}\left\langle \partial_xq_{2,n},\partial _{x}^{2}u_{2,n}\right\rangle =o(1).  \label{sbc14}
\end{eqnarray}
Summing (\ref{sbc13"}) with (\ref{sbc14}), we find
\begin{eqnarray}
w_{n}^{\beta}\left\| \theta_{2,n}\right\|
^{2}+\delta_{2}^2w_{n}^{\beta}\left\| \partial _{x}^{2}u_{2,n}\right\|
^{2} +2\delta_{2}w_{n}^{\beta}Re \left\langle \partial^2_{x} u_{2,n},\theta_{2,n}\right\rangle\notag \\+\mathbf{i}w_{n}^{\beta-1} \gamma_2\left( \left\langle q_n,\partial_x\theta _{2,n}\right\rangle-\mathbf{i}\delta_{2}w_{n}^{\beta-1}\left\langle \partial_xq_{2,n},\partial _{x}^{2}u_{2,n}\right\rangle\right)   =o(1) \label{sbc13}
\end{eqnarray}
It yelds, after integrating by parts
\begin{eqnarray}
w_{n}^{\beta}\left\| \theta_{2,n}\right\|
^{2}+\delta_{2}^2w_{n}^{\beta}\left\| \partial _{x}^{2}u_{2,n}\right\|
^{2} +2\delta_{2}w_{n}^{\beta}Re \left\langle \partial^2_{x} u_{2,n},\theta_{2,n}\right\rangle\notag \\+\mathbf{i}w_{n}^{\beta-1}\gamma_2\left[ \left\langle q_n,\partial_x\theta _{2,n}\right\rangle+\mathbf{i}\delta_{2}q_{2,n}(0)\partial _{x}^{2}\overline{u_{2,n}}(0)+\mathbf{i}\delta_{2}\left\langle q_{2,n},\partial _{x}^{3}u_{2,n}\right\rangle\right]    =o(1).
 \label{sbc13}
\end{eqnarray}

Using that $w_{n}q_{2,n}, \partial
_{x}\theta _{2,n} \longrightarrow 0$ in $L^{2}(0,\ell _{2})$, we have\begin{equation*}
w_{n}\left\langle q_{2,n},\partial
_{x}\theta_{2,n}\right\rangle \rightarrow 0.
\end{equation*}
From (\ref{sbc11}) we deduce that  $\frac{\partial_x^2\left(\alpha _{2}\partial
_{x}^{2}u_{2,n}-\beta _{2}\theta _{2,n}\right)}{w_{n}}$ is bounded. Then, using the Gagliardo-Nirenberg inequality and the boundedness of $\alpha _{2}\partial
_{x}^{2}u_{2,n}-\beta _{2}\theta _{2,n}$, we deduce that 
$\frac{\left\|\partial_x\left(\alpha _{2}\partial
_{x}^{2}u_{2,n}-\beta _{2}\theta _{2,n}\right)\right\| }{w_{n}^{1/2}}$ is bounded.
Then, taking into account that $\partial_{x}\theta _{2,n} \longrightarrow 0$ in $L^{2}(0,\ell _{2})$, it yields,  $\frac{\left\|\alpha_2\partial_{x}^{3}u_{2,n}\right\| }{w_{n}^{1/2}}=O(1)$. 
Hence, $$w_{n}^{1/2}\left\langle q_{2,n},\partial _{x}^{3}u_{2,n}\right\rangle \rightarrow 0.$$ (We have used that $w_nq _{2,n} \longrightarrow 0$).

Now, using again  that $w_nq _{2,n} \longrightarrow 0$, and that $\frac{1}{w_n}\partial_xq_{2,n}=O(1)$ (by the boundedness of $\theta_{2,n}$ and $\partial_{x}^{2}u_{2,n}$), we can deduce, via the Gagliardo-Nirenberg inequality, that
\begin{equation}
q_{2,n}(0)=o(1). \label{q0}
\end{equation}
Using (\ref{q0}) and (\ref{boun11}),  we get
  $$q_{2,n}(0)\partial _{x}^{2}\overline{u_{2,n}}(0)=o(1).$$
Hence (\ref{sbc13''bis}) holds true.
\end{proof}

\textbf{Second step:} We prove that $w_{n}^{1/2}\left\| \partial _{x}^{2}u_{2,n}\right\|
^{2}$, $w_{n}^{1/2}\left\| \theta
_{2,n}\right\|
^{2}$ and $w_{n}^{1/2} \left\| v_{2,n}\right\| ^{2}$ converge to zero.

By Gagliardo-Nirenberg inequality and the boundedness of $w_nu_n$ and $\partial_x^2u_{2,n}$, we have that $w^{1/2}\partial _{x}u _{2,n}$ is bounded. Recall again that $\partial _{x}\theta _{2,n}=o(1)$, then
\begin{equation}
w_n^{1/2}\left\langle \partial_{x} u_{2,n},\partial_{x}\theta_{2,n}\right\rangle=o(1)\label{diff}
\end{equation}
which implies, using (\ref{sbc13''bis}) with $\beta=1/2$,
\begin{equation}
w_{n}^{1/2}\left\| \partial _{x}^{2}u_{2,n}\right\|
^{2}=o(1)\;\;\text{and}\;\;w_{n}^{1/2}\left\| \theta
_{2,n}\right\|
^{2}=o(1).\label{diff"}
\end{equation}

Taking the inner product of (\ref{sbc11}) with $w_{n}^{-1}u_{2,n}$, we get
\begin{eqnarray}
&&w_{n}^{3}\left\| u_{2,n}\right\| ^{2}=\alpha _{2}w_{n}\|\partial _{x}^{2}u_{2,n}\|^2+\beta _{2}w_n\left\langle
\partial_x\theta _{2,n},\partial
_{x}u_{2,n}\right\rangle \notag\\&-&w_{n}\overline{u_{2,n}}(0)\left( \alpha _{2}\partial _{x}^{3}u_{2,n}(0)-\beta
_{2}\partial _{x}\theta _{2,n}(0)\right) +o(1). \label{sbc16} 
\end{eqnarray}

 Using (\ref{boun11}) and (\ref{boun11'}), we have that
\begin{equation}
w_{n}^{1/2}u_{2,n}(0)\left( \alpha _{2}\partial
_{x}^{3}u_{2,n}(0)-\beta _{2}\partial _{x}\theta _{2,n}(0)\right) =O(1).\label{222}
\end{equation}
and a fortiori
\begin{equation}
u_{2,n}(0)\left( \alpha _{2}\partial
_{x}^{3}u_{2,n}(0)-\beta _{2}\partial _{x}\theta _{2,n}(0)\right) =o(1).\label{222'}
\end{equation}  
Using (\ref{diff"}), (\ref{222'}) and (\ref{diff}) in (\ref{sbc16}) yields		
\begin{equation*}
w_{n}\left\|u_{2,n}\right\|
=o(1).\label{boun5'} 
\end{equation*}
Now we take $\beta=0$ in (\ref{ee22'}) to deduce in particular that
$$w_nu_{2,n}(0)=o(1)$$
which implies, keeping in mind   (\ref{boun11'}),
 \begin{equation*}
w_{n}^{1/2}u_{2,n}(0)\left( \alpha _{2}\partial
_{x}^{3}u_{2,n}(0)-\beta _{2}\partial _{x}\theta _{2,n}(0)\right) =o(1).\label{222""}
\end{equation*}
Return back to (\ref{sbc16}), it yields 
\begin{equation}
w_{n}^{5/2}\left\|u_{2,n}\right\|
^{2}=o(1).\label{boun5}
\end{equation}
\textbf{Third step:} We prove that $w_{n}\left\| \partial _{x}^{2}u_{2,n}\right\|
^{2}$, $w_{n}\left\| \theta
_{2,n}\right\|
^{2}$ and $w_{n} \left\| v_{2,n}\right\| ^{2}$ converge to zero.

Using the estimates of the second step in (\ref{ee22'}) with $\beta=1/2$, we get
$$w_n^{1/4}q_n(0)=o(1),\;\;w_n^{1/4}\partial_x^2u_{2,n}(0)=o(1),\;\;w_n^{5/4}u_{2,n}(0)=o(1),$$
and then by (\ref{eqlem8}),
$$\frac{\alpha _{2}\partial
_{x}^{3}u_{2,n}(0)-\beta _{2}\partial _{x}\theta _{2,n}(0)}{w_n^{1/4}}.$$
We deduce in particular that
\begin{equation}
w_{n}u_{2,n}(0)\left( \alpha _{2}\partial
_{x}^{3}u_{2,n}(0)-\beta _{2}\partial _{x}\theta _{2,n}(0)\right) =o(1).\label{sec2}
\end{equation}
Now, by using again the Gagliardo-Nirenberg inequality, we have for $\varepsilon>0$,
\begin{eqnarray}
|w_{n}\left\langle \partial_{x} u_{2,n},\partial
_{x}\theta_{2,n}\right\rangle|&\leq & \frac{\varepsilon}{2}w_n^2\|\partial_{x} u_{2,n}\|^2+\frac{1}{2\varepsilon}\|\partial_{x} \theta_{2,n}\|^2\notag\\
&\leq & C^2\varepsilon\left(w_n^{3/2}\| u_{2,n}\|w_n^{1/2}\|\partial^2_{x} u_{2,n}\|+w_n^{2}\| u_{2,n}\|^2 \right)+ \frac{1}{2\varepsilon}\|\partial_{x} \theta_{2,n}\|^2\notag\\
&\leq & \frac{C^2\varepsilon}{2}\left(w_n^{3}\| u_{2,n}\|^2+w_n\|\partial^2_{x} u_{2,n}\|^2+2w_n^{2}\| u_{2,n}\|^2 \right)+ \frac{1}{2\varepsilon}\|\partial_{x} \theta_{2,n}\|^2\label{sec3}
\end{eqnarray}

Combining (\ref{sbc13''bis}), with $\beta=1$, (\ref{sbc16}), (\ref{sec2}) and (\ref{sec3}), with $\varepsilon$  small enough we obtain that:

\begin{equation}
w_{n}\left\|\theta_{2,n}\right\|
^{2},\;\;w_{n}\left\| \partial _{x}^{2}u_{2,n}\right\|
^{2},\;\;w_{n}^{3}\left\| u_{2,n}\right\|
^{2}\longrightarrow 0.\label{boun5}
\end{equation}
\textbf{Fourth step:} Conclusion.

Using (\ref{ee22'}) with $\beta=1$ we get
\begin{equation}
w_n^{1/2}q_n(0)=o(1),\;\;w_n^{1/2}\partial_x^2u_{2,n}(0)=o(1),\;\;w_n^{3/2}u_{2,n}(0)=o(1) \label{es1}
\end{equation}
and then by (\ref{eqlem8}),
\begin{equation}
\alpha _{2}\partial
_{x}^{3}u_{2,n}(0)-\beta _{2}\partial _{x}\theta _{2,n}(0)=o(1).\label{es2}
\end{equation}
Finally we prove that $\left\| y_{n}\right\| _{\mathcal{H}}\longrightarrow 0:$ 

To this end, taking the real part of the inner product of
(\ref{sbc5}) by $(\ell_1-x)w_n^{-2}\partial _{x}u_{1,n}$  to get
\begin{equation}
\int_{0}^{\pi}\left( \left| \partial _{x}u_{1,n}(x)\right|
^{2}+w_{n}^{2}\left| u_{1,n}(x)\right| ^{2}\right) dx\longrightarrow 0.
\label{eee}
\end{equation}
We have used (\ref{es1}), (\ref{es2}), the continuity condition of $u_{n}$ at $x=0$, the boundary condition of $u_n$ at $x=\ell_1$ and the damping condition
 at $x=0$, in (\ref{sbc2}). In summary, we have $\left\| y_{n}\right\| _{%
\mathcal{H}}\longrightarrow 0.$ Such result contradicts the hypothesis that $%
y_{n}$ has the unit norm.
\end{proof}

\subsection{Lack of exponential stability}

For simplicity, we take, in this section, $\ell _{1}=\ell _{2}=\pi $. 
\begin{theorem}\label{th4.2}

The system ($\mathcal{S}_2$) is not
exponentially stable in the energy space $\mathcal{H}_2$. Moreover, the associated semigroup is polynomially stable at least for the order $1$ i.e.  for  every  $\alpha< 1$  we have :
\begin{equation}
\limsup_{|\lambda|\rightarrow\infty} |\lambda|^{-\alpha}\left\Vert(i\lambda I - \mathcal {A})^{-1}\right\Vert_{\mathcal{L}(\mathcal{H})}=\infty. \label{optim}
\end{equation}
\end{theorem}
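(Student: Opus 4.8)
The plan is to prove the two claims separately and by the standard frequency-domain techniques. For the lack of exponential stability, I would exhibit a sequence $(\lambda_n)$ on the imaginary axis and a bounded sequence $(F_n)$ in $\mathcal{H}_2$ for which the solutions $y_n$ of $(i\lambda_n I-\mathcal{A}_2)y_n = F_n$ satisfy $\|y_n\|_{\mathcal{H}_2}\to\infty$; by the Gearhart--Huang--Pr\"uss theorem (used in the exponential-stability section) this contradicts (\ref{2.2}) and hence rules out exponential decay. Concretely, with $\ell_1=\ell_2=\pi$ the purely elastic string $u_{1,tt}-\alpha_1u_{1,xx}=0$ with the given boundary conditions has an explicit family of (would-be) eigenfunctions of the form $u_{1,n}(x)=\sin\big(\tfrac{\lambda_n}{\sqrt{\alpha_1}}(\pi-x)\big)$, and the natural choice is to take $\lambda_n$ close to a frequency $\tfrac{\sqrt{\alpha_1}}{?}$ for which this profile \emph{almost} satisfies the transmission condition $\tfrac{\delta_2}{\beta_2}\partial_x(\alpha_2\partial_x^2u_2-\beta_2\theta_2)(0)=\alpha_1\partial_xu_1(0)$ while the beam--heat part contributes only a lower-order correction. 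I would then feed a forcing $F_n$ supported so as to make $u_{1,n}$ the dominant component, solve the beam/heat ODE system on $(0,\pi)$ driven by the interface data, and estimate that $\|u_{1,n}\|\sim 1$ while $\|F_n\|_{\mathcal{H}_2}\to 0$ (equivalently $\|(i\lambda_n-\mathcal{A}_2)^{-1}\|\to\infty$).

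For the quantitative lower bound (\ref{optim}), the cleanest route is the Borichev--Tomilov characterization (Theorem 2.4 in \cite{Tom10}): a bounded $C_0$-semigroup with $i\mathbb{R}\subset\rho(\mathcal{A}_2)$ decays at rate $t^{-1/\ell}$ on $\mathcal{D}(\mathcal{A}_2)$ if and only if $\|(i\lambda-\mathcal{A}_2)^{-1}\|=O(|\lambda|^{\ell})$. Since we already know (Polynomial stability theorem) that the decay rate $t^{-1/2}$ holds, improving it to $t^{-1/\alpha'}$ for some $\alpha'<2$ would force $\|(i\lambda-\mathcal{A}_2)^{-1}\|=O(|\lambda|^{\alpha'})$; so it suffices to show that for every $\alpha<1$ there is no constant $C$ with $\|(i\lambda-\mathcal{A}_2)^{-1}\|\le C|\lambda|^{\alpha}$ for large $|\lambda|$. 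I would obtain this from the \emph{same} family $(\lambda_n,y_n,F_n)$ as above, but keeping track of the precise rate: choosing $\lambda_n$ at distance $\sim \lambda_n^{-1}$ from the relevant string frequency, one gets $\|y_n\|_{\mathcal{H}_2}\gtrsim \lambda_n\,\|F_n\|_{\mathcal{H}_2}$, i.e. $\|(i\lambda_n-\mathcal{A}_2)^{-1}\|\gtrsim \lambda_n$, whence $|\lambda_n|^{-\alpha}\|(i\lambda_n-\mathcal{A}_2)^{-1}\|\gtrsim \lambda_n^{1-\alpha}\to\infty$ for every $\alpha<1$. This simultaneously gives non-exponential stability (the case of a fixed power) and the optimality statement.

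The main obstacle, and where the real work lies, is the delicate choice of $(\lambda_n)$ and the careful asymptotic bookkeeping of the beam/heat subsystem's contribution to the interface condition at $x=0$. One must solve the coupled fourth-order/first-order system $-\lambda^2 u_2+\partial_x^2(\alpha_2\partial_x^2u_2-\beta_2\theta_2)=0$, together with the Cattaneo pair, on $(0,\pi)$ with the boundary data $u_{2,x}(0)=0$, $\theta_2(0)=\theta_2(\pi)=0$, $u_2(\pi)=u_{2,xx}(\pi)=0$, and then read off $\partial_x(\alpha_2\partial_x^2u_2-\beta_2\theta_2)(0)$ as a function of $\lambda$; the quantity $u_2(0)$ (which must match $u_1(0)=\sin(\tfrac{\lambda\pi}{\sqrt{\alpha_1}})$) and the interface flux must both be controlled. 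The heat flux satisfies $\|q_{2,n}\|\sim\lambda_n^{-1}\|y_n\|$ along the dissipation identity, so the beam's response is essentially that of a conservative Petrowski beam perturbed by a $O(\lambda^{-1})$ thermal term, and the roots of the characteristic equation must be expanded accordingly. Getting the exact power of $\lambda_n$ in the gap between $\lambda_n$ and the string spectrum — so that the string resonates while the beam does not absorb the energy fast enough — is the crux; everything else (verifying $F_n\in\mathcal{H}_2$, bounding $\|F_n\|$, invoking Gearhart--Huang--Pr\"uss and Borichev--Tomilov) is routine.
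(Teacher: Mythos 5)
Your overall strategy is the same as the paper's: force only the string equation with a resonant right-hand side proportional to $\sin(\gamma_n x)$, solve the thermoelastic beam part exactly as a linear ODE system on $(0,\pi)$, and choose the frequencies so that the resolvent blows up faster than $|\lambda|^{\alpha}$ for every $\alpha<1$, which simultaneously kills exponential stability and gives \eqref{optim}. However, the step you yourself identify as the crux is precisely where the proposal stops, and the shortcut you assert does not go through as stated. You claim that ``choosing $\lambda_n$ at distance $\sim\lambda_n^{-1}$ from the relevant string frequency, one gets $\|y_n\|_{\mathcal{H}_2}\gtrsim \lambda_n\|F_n\|_{\mathcal{H}_2}$.'' This is not justified: whether the string resonance survives is governed by the beam's response at the interface, whose leading behaviour is oscillatory in the beam wavenumber $\sim b^{-1/4}\sqrt{\lambda_n}$ (with $b=\alpha_2+\delta_2\beta_2$), and this is in general incommensurable with the string wavenumber $\gamma_n=\lambda_n/\sqrt{\alpha_1}$. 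One cannot simply tune the distance of $\lambda_n$ to a string eigenfrequency; one has to control simultaneously $\tan(\gamma_n\pi)$ and the trigonometric factors coming from the beam branch of the characteristic roots.

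Concretely, the paper must expand the six roots of the coupled beam--Cattaneo characteristic equation (Lemma \ref{lema0}), compute the interface quantities $S_n$ and $T_n$ built from the boundary determinants, and then select $w_n$ by a Diophantine argument: Dirichlet's theorem provides integers $p_n,q_n$ with $|(\alpha_1/b)^{1/4}q_n-p_n|<1/q_n$, and one takes $\sqrt{\gamma_n}=q_n+\alpha/q_n^2$ with the tuned value $\alpha=\alpha_1^{1/4}/(4b^{1/4})$, so that $S_n-\alpha_1 T_n\gamma_n\tan(\gamma_n\pi)$ becomes negligible and hence $|\gamma_n c_1|\geq C\gamma_n^{1-\varepsilon}$. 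Note that even this careful construction yields blow-up of order $\lambda_n^{1-\varepsilon}$ only, not the full factor $\lambda_n$ you assert; that is still sufficient for every $\alpha<1$, but your stronger claim is not obtained and would need a finer argument. Likewise, the observation that the dissipation forces $\|q_{2,n}\|\lesssim\lambda_n^{-1}\|y_n\|$, so that the beam is ``essentially a conservative Petrowski beam perturbed by an $O(\lambda^{-1})$ thermal term,'' does not by itself control the interface flux $\partial_x(\alpha_2\partial_x^2u_2-\beta_2\theta_2)(0)$ and the value $u_2(0)$ to the required precision. Without the root expansion, the explicit interface computation, and the number-theoretic choice of frequencies, the existence of a sequence with $\|y_n\|/\|F_n\|$ growing faster than $\lambda_n^{\alpha}$ remains an assertion rather than a proof, so the proposal has a genuine gap exactly at the heart of the theorem.
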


\begin{proof}[Proof of Theorem \ref{th4.2}]
To prove (\ref{optim}), it is enough to show that there exist a bounded
sequence $f_{n}$ in $\mathcal{H}_2$ and a sequence $w_{n}$ of real numbers
with $w_{n}\rightarrow \infty$ such that the solution $y_{n}$ of
\begin{equation}
w_n^\alpha\left( \mathbf{i}w_{n}-\mathcal{A}_2\right) y_{n}=f_{n}  \label{lk1}
\end{equation}
satisfies
\begin{equation}
\lim\limits_{n\rightarrow 0}\left\| y_{n}\right\| _{\mathcal{H}_2}=\infty . \label{inf}
\end{equation}
For each $n\in \mathbb{N},$ we choose $f_{n}=(0,0,-\alpha _{1}\sin \frac{%
w_{n}}{\sqrt{\alpha _{1}}}x,0,0,0)$. Then $f_{n}\in \mathcal{H}_2$ and 
is bounded in $\mathcal{H}_2.$ The solution $y_{n}$ of (\ref{lk1})
satisfies
\begin{eqnarray}
w_n^\alpha\left( \mathbf{i}w_{n}u_{1,n}-v_{1,n}\right)  &=&0,\;\;\;\text{in}%
\;H^{1}(0,\ell _{1}),  \label{lk2} \\
w_n^\alpha\left( \mathbf{i}w_{n}v_{1,n}-\alpha _{1}\partial _{x}^{2}u_{1,n}\right)  &=&-\alpha _{1}%
\sin \frac{w_{n}}{\sqrt{\alpha _{1}}},\;\;\;\text{in}%
\;L^{2}(0,\ell _{1})  \label{lk3}
\end{eqnarray}
and
\begin{eqnarray}
w_n^\alpha\left( \mathbf{i}w_{2,n}u_{2,n}-v_{2,n}\right)  &=&0,\;\;\;\text{in}%
\;H^{2}(0,\ell _{2}),  \label{lk4} \\
w_n^\alpha\left( \mathbf{i}w_{n}v_{2,n}+\partial _{x}^{2}\left( \alpha _{2}\partial _{x}^{2}u_{2,n}-\beta
_{2}\theta _{2,n}\right)\right)   &=&0,\;\;\;\text{in}%
\;L^{2}(0,\ell _{2}),  \label{lk5} \\
w_n^\alpha\left( \mathbf{i}w_{n}\theta _{2,n}+\delta _{2}\partial _{x}^{2}v_{2,n}+\gamma
_{2}\partial _{x}q_{2,n}\right)  &=&0,\;\;\;\text{in}%
\;L^{2}(0,\ell _{2}),  \label{lk6}\\
w_n^\alpha\left( \mathbf{i}w_{n}\tau _2q_{2,n}+q_{2,n}+\kappa _{2}\partial _{x}\theta _{2,n}\right)  &=&0,\;\;\;\text{in}%
\;L^{2}(0,\ell _{2}).  \label{lk7}
\end{eqnarray}
Let $\gamma _{n}:=\frac{w_{n}}{\sqrt{\alpha _{1}}}$ and substitute equation (%
\ref{lk2}) into (\ref{lk3}) to obtain the second order equation
\begin{equation}
w_n^\alpha\left( \gamma _{n}^{2}u_{1,n}+\partial _{x}^{2}u_{1,n}\right)=\alpha _{1}\sin \gamma _{n}x ,
\label{st1}
\end{equation}
which can be solved by
\begin{equation*}
u_{1,n}=\frac{1}{w_n^\alpha}\left(  c_{1}\sin (\gamma _{n}x)+(-\frac{x}{2\gamma _{n}}+c_{2})\cos (\gamma
_{n}x)\right) 
\end{equation*}
where the complex numbers $c_{1}$ and $c_{2}$ depend on $n$.

Substitute (\ref{lk4}) into (\ref{lk5}) and (\ref{lk6}) to get
\begin{eqnarray}
-w_{n}^{2}u_{2,n}+\alpha _{2}\partial _{x}^{4}u_{2,n}-\beta _{2}\partial
_{x}^{2}\theta _{2,n} &=&0,  \label{002} \\
\mathbf{i}w_{n}\theta _{2,n}+\mathbf{i}w_{n}\delta _{2}\partial
_{x}^{2}u_{2,n}+\gamma _{2}\partial _{x}q _{2,n} &=&0.  \label{003}
\end{eqnarray}
Notice that, by (\ref{lk7}), we have $\partial_{x}\theta _{2,n}\in H^1(0,\pi)$.

We replace $\partial _xq_{2,n}$ in (\ref{003}) by $-\frac{\kappa_2}{\mathbf{i}w_n\tau _2+1}\partial _x^2\theta _{2,n},$ then multiplying the obtained result by $\mathbf{i}w_n$ to get
\begin{equation}
\mathbf{i}w_n\theta _{2,n}-\frac{\gamma _2 \kappa _2}{\mathbf{i}w_n\tau _2+1}\partial _{x}^{2}\theta_{2,n}+\mathbf{i}w_n\delta _{2}\partial _{x}^{2}u_{2,n} =0,\;\;\;\text{in}%
\;L^{2}(0,\ell _{2}). \label{003'}
\end{equation}

Combining (\ref{002}) and (\ref{003'}), it yields
\begin{equation}
\alpha_{2} \kappa_{2} \gamma _2 \partial _{x}^{6}u_{2,n}-\mathbf{i}w_n(\mathbf{i}w_n\tau _2+1)(\alpha _{2}+\delta _2\beta
_{2})\partial _{x}^{4}u_{2,n}-\gamma _2\kappa _{2}w_{n}^{2}\partial
_{x}^{2}u_{2,n}+\mathbf{i}w_n^3(\mathbf{i}w_n\tau _2+1)u_{2,n}=0  \label{e6}
\end{equation}

which is an ordinary differential equation of order $6$ in $u_{2,n}.$ Its
characteristic equation is 
\begin{equation}
\alpha_{2} \kappa_{2} \gamma _2 z^{6}-\mathbf{i}w_n(\mathbf{i}w_n\tau _2+1)(\alpha _{2}+\delta _2\beta
_{2})z^{4}-\gamma _2\kappa _{2}w_{n}^{2}z^{2}+\mathbf{i}w_n^3(\mathbf{i}w_n\tau _2+1)
=0.  \label{ec0}
\end{equation}
In the sequel we denote, $L_n:=\mathbf{i}w_n\tau _2+1$, $a:=\alpha_{2} \kappa_{2} \gamma _2,$ $b:=\alpha _{2}+\delta _2\beta_{2}$, and $c:=\gamma _2\kappa _{2}$.
\begin{lemma} \label{lema0}
The equation
\begin{equation}
a z^{6}-\mathbf{i}w_nL_nbz^{4}-cw_{n}^{2}z^{2}+\mathbf{i}w_n^3L_n
=0.,   \label{ec00}
\end{equation}
admits six simple solutions, $\pm z_{1},\pm z_{2}$ and $\pm
z_{3}.$ 
\end{lemma}
\begin{proof}
The solutions of (\ref{ec00}) are the square roots of those of the following
equation
\begin{equation}
a X^{3}-\mathbf{i}w_nL_nbX^{2}-cw_{n}^{2}X+\mathbf{i}w_n^3L_n
=0.  \label{ec}
\end{equation}

By taking $s=X-\mathbf{i}\frac{b}{3a}w_nL_n$, that is $X=s+\mathbf{i}\frac{b}{3a}w_nL_n$
the equation (\ref{ec}) becomes
\begin{equation}
s^{3}+ps+q=0  \label{ec2}
\end{equation}
with
\begin{eqnarray*}
p &:=&\frac{w_n^2}{a}\left( \frac{(L_nb)^2}{3a}-c\right)=\frac{w_n^2L_n^2b^2}{3a^2}\left( 1+o(\frac{1}{w_n^{2-\varepsilon}})\right) \text{ and} \\
q &:=&\mathbf{i}\frac{w_n^3L_n}{a}\left( \frac{2L_n^2b^3}{3^3a^2}+1-\frac{bc}{3a}\right)=
\mathbf{i}2\frac{w_n^3L_nb^3}{3^3a^3}\left( 1+o(\frac{1}{w_n^{2-\varepsilon}}) \right)
\end{eqnarray*}

The discriminant of such equation is
\begin{eqnarray*}
\Delta &:=&q^{2}+\frac{4}{27}p^{3}
=-M^2w_n^{6}L_n^4\left(1+o(\frac{1}{w_n^{2-\varepsilon}}) \right)
\end{eqnarray*}
where $\varepsilon$ is a small enough positive number and $M^2:=\frac{4b^3}{3^3a^4}=\frac{2m}{a}$. 

Notice that for $n$ large enough, $\Delta \neq 0$ then, the equation (\ref{ec2}) admits three simple solutions $%
s_{1}=u_{0}+v_{0},$ $s_{2}=\mathbf{j}u_{0}+\mathbf{j}^{2}v_{0}$ and $s_{3}=%
\mathbf{j}^{2}u_{0}+\mathbf{j}v_{0}$ with $u_{0}^3=\frac{1}{2}(-q+\delta
)$,  $v_{0}^3=\frac{1}{2}(-q-\delta)$ where $\delta$ is a square root of $\Delta$, $\mathbf{j}=e^{\mathbf{i} \frac{2 \pi}{3}}$ and such that $u_0v_0=-\frac{p}{3}$.

We take
\begin{equation*}
\delta=\mathbf{i} Mw_n^{3}L_n^2\left(1+o(\frac{1}{w_n^{2-\varepsilon}}) \right)=\mathbf{i} Mw_n^{3}L_n^3\left(\frac{1}{L_n}+o(\frac{1}{w_n^{2-\varepsilon}}) \right).
\end{equation*}
Then
\begin{equation*}
u_0^3=-\mathbf{i}\frac{m}{2}w_n^3L_n^3\left(1-\frac{M}{mL_n}+o(\frac{1}{w_n^{2-\varepsilon}}) \right).
\end{equation*}
and
\begin{equation*}
v_0^3=-\mathbf{i}\frac{m}{2}w_n^3L_n^3\left(1+\frac{M}{mL_n}+o(\frac{1}{w_n^{2-\varepsilon}}) \right),
\end{equation*}
where $m:=\frac{2b^3}{3^3a^3}$. We choose
\begin{equation*}
u_0=\mathbf{i}\frac{b}{3a}w_nL_n\left(1-\frac{M}{3mL_n} +o(\frac{1}{w_n^{2-\varepsilon}}) \right),
\end{equation*}
then
\begin{equation*}
v_0=\mathbf{i}\frac{b}{3a}w_nL_n\left(1+\frac{M}{3mL_n} +o(\frac{1}{w_n^{2-\varepsilon}}) \right).
\end{equation*}
Hence, the solutions of (\ref{ec2}) are
\begin{equation*}
\left\{ 
\begin{tabular}{l}
$s_{1} =\mathbf{i}\frac{2b}{3a}w_nL_n\left(1+o(\frac{1}{w_n^{2-\varepsilon}}) \right),$ \\
$s_{2} =-\mathbf{i}\frac{b}{3a}w_nL_n\left(1+\mathbf{i}\frac{\sqrt{3}M}{3mL_n}+o(\frac{1}{w_n^{2-\varepsilon}}) \right),$ \\
$s_{3} =-\mathbf{i}\frac{b}{3a}w_nL_n\left(1-\mathbf{i}\frac{\sqrt{3}M}{3mL_n}+o(\frac{1}{w_n^{2-\varepsilon}}) \right).$
\end{tabular}
\right.
\end{equation*}
The solutions of (\ref{ec}) are ($x_1$ associated to $s_2$)
\begin{equation*}
\left\{ 
\begin{tabular}{l}
$x_{1} =\frac{bM}{3\sqrt{3}ma}w_n\left(1+o(\frac{1}{w_n^{1-\varepsilon}}) \right) =b^{-1/2}w_n\left(1+o(\frac{1}{w_n^{1-\varepsilon}}) \right),$ \\
$x_{2} =-\frac{bM}{3\sqrt{3}ma}w_n\left(1+o(\frac{1}{w_n^{1-\varepsilon}}) \right) =-b^{-1/2}w_n\left(1+o(\frac{1}{w_n^{1-\varepsilon}}) \right),$ \\
$x_{3} =\mathbf{i}\frac{b}{a}w_nL_n\left(1+o(\frac{1}{w_n^{2-\varepsilon}}) \right)=-\frac{b}{a}\tau_2 w_n^2\left(1-\mathbf{i}\frac{1}{\tau_2 w_n} +o(\frac{1}{w_n^{2-\varepsilon}})\right),$
\end{tabular}
\right.
\end{equation*}
they are simple and different from $0.$ Consequently, the equation (\ref{eee}%
) admits six simple solutions,  $\pm z_{1},\pm z_{2}$ and $\pm z_{3}$, with
\begin{equation}
\left\{ 
\begin{tabular}{l}
$z_{1} =b^{-1/4}\sqrt{w_n}\left(1+o(\frac{1}{w_n^{1-\varepsilon}}) \right)$ \\ 
$z_{2} =\mathbf{i}b^{-1/4}\sqrt{w_n}\left(1+o(\frac{1}{w_n^{1-\varepsilon}}) \right),$ \\ 
$z_{3} =\mathbf{i}\sqrt{\frac{b\tau_2}{a}}w_n\left(1-\mathbf{i}\frac{1}{2\tau_2 w_n} +o(\frac{1}{w_n^{2-\varepsilon}}) \right).$
\end{tabular}
\right.\label{V}
\end{equation}
\end{proof}

Return back to the ODE (\ref{e6}). The general solution of (\ref{e6}) is given by
\begin{equation}
u_{2,n}=\sum_{k=1}^{3}(d_{k}e^{z_kx}+b_{k}e^{-z_kx})\label{exu}
\end{equation}
where $d_{k},b_{k}$ $k=1,2,3$ are some complex numbers dependent on $n$.

Finally, combining (\ref{002}) and (\ref{003'}), it yields
\begin{equation}
\beta \theta_{2,n}=\mathbf{i}w_n\frac{\gamma_2\kappa_2}{\mathbf{i}w_n\tau_2+1}u_{2,n}-\delta_2\beta_2\partial_x^2u_{2,n}+\frac{\alpha_2\gamma_2\kappa_2}{\mathbf{i}w_n(\mathbf{i}w_n\tau_2+1)}\partial_x^4u_{2,n}
\end{equation}
which will allow us to deduce the explicit expression of $\theta_{2,n}$.  Taking into account  (\ref{exu}) and (\ref{ec0}), 
\begin{equation*}
\beta_{2} \theta _{2,n}=\sum_{k=1}^{3}(-\frac{w_n^2}{z_{k}^{2}}+\alpha_{2}
z_{k}^{2})(d_{k}e^{z_kx}+b_{k}e^{-z_{k}x}).
\end{equation*}
We have also
\begin{equation*}
\beta_{2} \partial _{x}\theta _{2,n}=\sum_{k=1}^{3}(-\frac{w_n^2}{z_{k}}%
+\alpha_{2} z_{k}^{3})(d_{k}e^{z_kx}-b_{k}e^{-z_{k}x})
\end{equation*}
and
\begin{equation*}
\beta_{2} \partial _{x}^{2}\theta _{2,n}=\sum_{k=1}^{3}(-w_n^2+\alpha_{2}
z_{k}^{4})(d_{k}e^{z_kx}+b_{k}e^{-z_{k}x}).
\end{equation*}

Now using the transmission and boundary conditions we have:
\begin{lemma} \label{lem 3.4}
There exists a sequence $\gamma _n:=\frac{w_{n}}{\sqrt{\alpha _{1}}}$ of real positive numbers diverging to infinity such that
\begin{equation}
\frac{1}{w_n^{2\alpha}}\left( \left| -\frac{1}{2\gamma _{n}}+\gamma _{n}c_{1}\right| ^{2}+\left| \gamma
_{n}c_{2}\right| ^{2}\right) \rightarrow +\infty. \label{eess}
\end{equation}
\end{lemma}
\begin{proof}
\textbf{Step 1.} We will built two sequences $S_n$ and $T_n$ such that
\begin{equation}
\gamma _{n}c_{1}(S_{n}-T_{n}\gamma _{n}\tan (\gamma _{n}\pi ))=\frac{%
1}{2\gamma _{n}}S_{n}-\frac{\pi }{2}\gamma _{n}T_{n} \label{seq}
\end{equation}

The transmission conditions at $0$ are expressed as follow :
\begin{eqnarray}
\sum_{k=1}^{3}(d_{k}+b_{k}) &=&\frac{1}{w_n^\alpha} c_{2},  \label{n1} \\
\sum_{k=1}^{3}z_{k}(d_{k}-b_{k}) &=&0,  \label{n2} \\
w_n^2\sum_{k=1}^{3}\frac{1}{%
z_{k}}(d_{k}-b_{k}) &=&\frac{\alpha_1}{w_n^\alpha}\left( -\frac{1}{2\gamma _{n}}+\gamma
_{n}c_{1}\right) ,  \label{n3} \\
\sum_{k=1}^{3}(-\frac{w_n^2}{z_{k}^{2}}+\alpha_{2} z_{k}^{2})(d_{k}+b_{k}) &=&0,
\label{n4}
\end{eqnarray}
and the boundary conditions at $\ell _{1}$ and $\ell _{2}$ are
\begin{eqnarray}
\sum_{k=1}^{3}(d_{k}e^{z_{k}\pi }+b_{k}e^{-z_{k}\pi
}) &=&0, \label{01} \\
\sum_{k=1}^{3}z_{k}^{2}(d_{k}e^{z_{k}\pi }+b_{k}e^{-z_{k}\pi }) &=&0, \label{02} \\
\sum_{k=1}^{3}\frac{1}{z_{k}^{2}}(d_{k}e^{z_{k}\pi }+b_{k}e^{-%
z_{k}\pi }) &=&0, \label{03}\\
c_{1}\sin (\gamma _{n}\pi )+(-\frac{\pi }{2\gamma _{n}}+c_{2})\cos (\gamma
_{n}\pi ) &=&0.\label{04}
\end{eqnarray}

Taking $s_{k}:=d_{k}+b_{k}$ and $t_{k}:=d_{k}-b_{k}.$ Then (\ref{n1}), (\ref
{n4}) and (\ref{n2}), (\ref{n3}) can be rewritten as follow
\begin{eqnarray}
s_{1}+s_{2}+s_{3} &=&\frac{1}{w_n^\alpha} c_{2},  \label{m1} \\
(-\frac{w_n^2}{z_{1}^{2}}+\alpha_{2} z_{1}^{2})s_{1}+(-\frac{w_n^2}{z_{2}^{2}}+\alpha_{2}
z_{2}^{2})s_{2}+(-\frac{w_n^2}{z_{3}^{2}}+\alpha_{2} z_{3}^{2})s_{3} &=&0
\label{m2}
\end{eqnarray}
and
\begin{eqnarray}
z_{1}t_{1}+z_{2}t_{2}+z_{3}t_{3} &=&0,  \label{m3} \\
\frac{1}{z_{1}}t_{1}+\frac{1}{z_{2}}t_{2}+\frac{1}{z_{3}}t_{3}
&=&\frac{1}{w_{n}^{2+\alpha}} \left( -\frac{1}{2\gamma _{n}}+\gamma _{n}c_{1}\right) =:-T.  \label{m4}
\end{eqnarray}
It yields
\begin{eqnarray*}
X_{1}t_{1}+X_{2}t_{2} &=&z_{3}^{2}T, \\
Z_{1}s_{1}+Z_{2}s_{2} &=&Z_{3}c_{2}
\end{eqnarray*}
with
\begin{equation}
\left\{ 
\begin{tabular}{l}
$X_{j} :=z_{j}-\frac{z_{3}^{2}}{z_{j}}=\frac{1}{z_{j}}%
(z_{j}^{2}-z_{3}^{2}),\;\;j=1,2,$ \\
$Z_{j} :=\alpha_{2} (z_{j}^{2}-z_{3}^{2})-w_n^2(\frac{1}{z_{j}^{2}}-\frac{1}{%
z_{3}^{2}})=(\alpha_{2} +\frac{w_n^2}{z_{j}^{2}z_{3}^{2}})(z_{j}^{2}-z_{3}^{2}),\;%
\;j=1,2,$ \\
$Z_{3}:=\frac{w_n^2}{z_{3}^{2}}-\alpha_{2} z_{3}^{2}.$
\end{tabular}
\right.\label{VI}
\end{equation}
We deduce that
\begin{eqnarray}
2d_{2} &=&a_{1}s_{1}+a_{2}t_{1}+a_{3}c_{2}+a_{4}T,  \label{e1} \\
2b_{2} &=&a_{1}s_{1}-a_{2}t_{1}+a_{3}c_{2}-a_{4}T  \label{e2}
\end{eqnarray}
with
\begin{equation}
a_{1}:=-\frac{Z_{1}}{Z_{2}},\;a_{2}:=-\frac{X_{1}}{X_{2}},\;a_{3}:=\frac{Z_{3}}{%
Z_{2}}\;\text{and }a_{4}:=\frac{z_{3}^{2}}{X_{2}}. \label{VII}
\end{equation}

Return back to (\ref{m1}) and (\ref{m3}), we obtain
\begin{eqnarray}
2d_{3} &=&A_{1}s_{1}+A_{2}t_{1}+A_{3}c_{2}+A_{4}T,  \label{e3} \\
2b_{3} &=&A_{1}s_{1}-A_{2}t_{1}+A_{3}c_{2}-A_{4}T  \label{e4}
\end{eqnarray}
with
\begin{equation}
\left\{ 
\begin{tabular}{l}
$A_{1} =-1-a_1=\frac{Z_{1}}{Z_{2}}-1=(\alpha +\frac{w_n^2}{z_{1}^{2}z_{2}^{2}})\frac{%
z_{1}^{2}-z_{2}^{2}}{Z_{2}},$ \\
$A_{2} =\frac{z_{2}}{z_{3}}\frac{X_{1}}{X_{2}}-\frac{z_{1}}{z_{3}}=\frac{%
z_{3}(z_{1}^{2}-z_{2}^{2})}{z_{1}(z_{2}^{2}-z_{3}^{2})},$ \\
$A_{3} =1-a_3=1-\frac{Z_{3}}{Z_{2}}=-\frac{w_n^2-\alpha z_{2}^{4}}{z_{2}^{2}Z_{2}},$ \\
$A_{4} =-\frac{z_2}{z_3}a_4=-\frac{z_{2}z_{3}}{X_{2}}=-\frac{z_{2}^{2}z_{3}}{z_{2}^{2}-z_{3}^{2}%
}.$
\end{tabular}
\right.\label{VIII}
\end{equation}

Replacing $b_{2},d_{2},b_{3}$ and $d_{3}$ given by (\ref{e1})-(\ref{e4}) in (%
\ref{01})-(\ref{03}), we obtain
\begin{equation}
f_{j}d_{1}+g_{j}b_{1}+h_{j}c_{2}+l_{j}T=0,\;\;j=0,2,-2  \label{o1}
\end{equation}
with
\begin{equation}
\left\{ 
\begin{tabular}{l}
$f_{j} :=2z_{1}^{j}e^{z_{1}\pi }+z_{2}^{j}(a_{1}+a_{2})e^{%
z_{2}\pi }+z_{3}^{j}(A_{1}+A_{2})e^{z_{3}\pi }$ \\
$\;\;\;\;\;\;\;\;\;+z_{2}^{j}(a_{1}-a_{2})e^{-z_{2}\pi
}+ z_{3}^{j}(A_{1}-A_{2})e^{-z_{3}\pi },$ \\
$\;\;\;\;\;=e^{z_{1}\pi
}(2z_{1}^{j}+e^{-\varepsilon\sqrt{w_n}}o(1)),$\\
$g_{j} :=z_{2}^{j}(a_{1}-a_{2})e^{z_{2}\pi
}+ z_{3}^{j}(A_{1}-A_{2})e^{z_{3}\pi }$  \\
$\;\;\;\;\;\;\;\;\;+2z_{1}^{j}e^{-z_{1}\pi }+z_{2}^{j}(a_{1}+a_{2})e^{-z_{2}\pi }+z_{3}^{j}(A_{1}+A_{2})e^{-z_{3}\pi },$ \\
$h_{j} :=a_{3}z_{2}^{j}e^{z_{2}\pi }+ A_{3}z_{3}^{j}e^{z_{3}\pi } +a_{3}z_{2}^{j}e^{-z_{2}\pi }+A_{3}z_{3}^{j}e^{-z_{3}\pi },$ \\
$l_{j} :=a_{4}z_{2}^{j}e^{z_{2}\pi }+ A_{4}z_{3}^{j}e^{z_{3}\pi } -a_{4}z_{2}^{j}e^{-z_{2}\pi }-A_{4}z_{3}^{j}e^{-z_{3}\pi }.$
\label{bb"}
\end{tabular}
\right.
\end{equation}
Combining (\ref{04}) and (\ref{m4}) with (\ref{o1}), it follows
\begin{equation}
w_{n}^{2}(f_{j}d_{1}+g_{j}b_{1})=\left( -\frac{\pi }{2\gamma _{n}}+c_{1}\tan
(\gamma _{n}\pi )\right) h_{j}+\frac{\alpha_1}{w_n^\alpha}\left( -\frac{1}{2\gamma _{n}}+\gamma
_{n}c_{1}\right)l_{j},\;\;j=0,2.  \label{022}
\end{equation}

After some combinations we conclude, using (\ref{022}), the following
equation,
\begin{equation}
\alpha_1\left( -\frac{1}{2\gamma _{n}}+\gamma _{n}c_{1}\right) S_{n}=w_{n}^{2}\left( -\frac{\pi
}{2\gamma _{n}}+c_{1}\tan (\gamma _{n}\pi )\right) T_{n}  \label{eqff}
\end{equation}
where
\begin{eqnarray*}
&&S_{n}
:=(g_{2}f_{0}-g_{0}f_{2})l_{-2}-(g_{2}l_{0}-g_{0}l_{2})f_{-2}+(f_{2}l_{0}-f_{0}l_{2})g_{-2},
\\
&&T_{n}
:=-(g_{2}f_{0}-g_{0}f_{2})h_{-2}+(g_{2}h_{0}-g_{0}h_{2})f_{-2}-(f_{2}h_{0}-f_{0}h_{2})g_{-2}.
\end{eqnarray*}
From (\ref{eqff}) one has
\begin{equation*}
\gamma _{n}c_{1}(S_{n}-T_{n}\gamma _{n}\tan (\gamma _{n}\pi ))=\frac{%
1}{2\gamma _{n}}S_{n}-\frac{\pi }{2}\gamma _{n}T_{n}
\end{equation*}

For $k:=\frac{b\pi}{a\kappa_2}$,  $\theta_2=\frac{1}{\mathbf{i}}z_2\pi$, $\theta_3=\frac{1}{\mathbf{i}}(z_3\pi-k)$ and $P:=z_{1}^{-2}(z_{2}^{2}-z_{3}^{2})+z_{2}^{-2}(z_{3}^{2}-z_{1}^{2})+z_{3}^{-2}(z_{1}^{2}-z_{2}^{2}),$ and by using (\ref{bb"}), we have the following expressions of $S_n$ and $T_n$,

\begin{eqnarray*}
S_{n} &=&P\left( D_{1}e^ke^{\mathbf{i}(\theta_2+\theta_3)}+D_{2}e^{-k}e^{-\mathbf{i}(\theta_2+\theta_3)}+D_{3}e^{-k}e^{\mathbf{i}(\theta_2-\theta_3)}+D_{4}e^ke^{-\mathbf{i}(\theta_2-\theta_3)}+e^{-\varepsilon\sqrt{w_n}}o(1)\right)\\
&=&P\left[\left( (D_1+D_4)e^k+(D_2+D_3)e^{-k}\right) \cos(\theta_2)\cos(\theta_3)\right.\\
&-&\left( (D_1-D_4)e^k+(D_2-D_3)e^{-k}\right)\sin(\theta_2)\sin(\theta_3)\\
&+&\mathbf{i}\left( (D_1-D_4)e^k-(D_2-D_3)e^{-k}\right) \sin(\theta_2)\cos(\theta_3)\\&+&\mathbf{i}\left( (D_1+D_4)e^k-(D_2+D_3)e^{-k}\right)\cos(\theta_2)\sin(\theta_3)\\
&+&\left.e^{-\varepsilon\sqrt{w_n}}o(1)\right].
\end{eqnarray*}
and
\begin{eqnarray*}
T_{n} &=&P\left( D^\prime_{1}e^ke^{\mathbf{i}(\theta_2+\theta_3)}+D^\prime_{2}e^{-k}e^{-\mathbf{i}(\theta_2+\theta_3)}+D^\prime_{3}e^{-k}e^{\mathbf{i}(\theta_2-\theta_3)}+D^\prime_{4}e^ke^{-\mathbf{i}(\theta_2-\theta_3)}+e^{-\varepsilon\sqrt{w_n}}o(1)\right)\\
&=&P\left[\left( (D^\prime_1+D^\prime_4)e^k+(D^\prime_2+D^\prime_3)e^{-k}\right) \cos(\theta_2)\cos(\theta_3)\right.\\
&-&\left( (D^\prime_1-D^\prime_4)e^k+(D^\prime_2-D^\prime_3)e^{-k}\right)\sin(\theta_2)\sin(\theta_3)\\
&+&\mathbf{i}\left( (D^\prime_1-D^\prime_4)e^k-(D^\prime_2-D^\prime_3)e^{-k}\right) \sin(\theta_2)\cos(\theta_3)\\
&+&\mathbf{i}\left( (D^\prime_1+D^\prime_4)e^k-(D^\prime_2+D^\prime_3)e^{-k}\right)\cos(\theta_2)\sin(\theta_3)\\
&+&\left.e^{-\varepsilon\sqrt{w_n}}o(1)\right].
\end{eqnarray*}
where $\varepsilon$ is a small enough positive number and with
\begin{eqnarray*}
&&D_{1} :=bA_4-a_4B,\;\; D_{2} :=a_4A-aA_4,\\
&&D_{3} :=-(bA_4+a_4A),\;\;D_{4} :=aA_4+Ba_4 \\
\end{eqnarray*}
and
\begin{eqnarray*}
&&D^\prime_{1} :=bA_3-a_3B,\;\;D^\prime_{2} :=aA_3-a_3A, \\
&&D^\prime_{3} :=bA_3-a_3A,\;\;D^\prime_{4} :=aA_3-Ba_3. \\
\end{eqnarray*}

\textbf{Step 2.} Estimate of $S_n$ and $T_n$ and conclusion.
We focus now on the asymptotic behavior of $S_{n}$ and $T_{n}$
under $n$. To do this, we need the estimates of $a_i, \;\;i=1,\cdots,4$ and $A_i, \;\;i=1,\cdots,4$. 

It remains first to estimate $Z_1, Z_2,Z_3$ and $X_1, X_2$. Using (\ref{V}) in (\ref{VI}), we get
\begin{align*}
    Z_1 =\frac{b}{a}\alpha _2\tau_2 w_n^2\left(1+o(\frac{1}{w_n^{1-\varepsilon}}) \right), & 
    Z_2 =\frac{b}{a}\alpha _2\tau_2 w_n^2\left(1+o(\frac{1}{w_n^{1-\varepsilon}}) \right), &
    Z_3 =\frac{b}{a}\alpha _2\tau_2 w_n^2\left(1+o(\frac{1}{w_n^{1-\varepsilon}}) \right),
    \end{align*}
    \begin{align*}
    X_1 &=\frac{b^{5/4}}{a}\tau_2 w_n^{3/2}\left(1+o(\frac{1}{w_n^{1-\varepsilon}}) \right),&
    X_2 &=-\mathbf{i} \frac{b^{5/4}}{a}\tau_2 w_n^{3/2}\left(1+o(\frac{1}{w_n^{1-\varepsilon}}) \right).
    \end{align*}

So  by (\ref{VII}) and (\ref{VIII}), we obtain
\begin{align*}
    a_1 &=-\left(1+o(\frac{1}{w_n^{1-\varepsilon}}) \right), & 
    a_2 &=-\mathbf{i} \left(1+o(\frac{1}{w_n^{1-\varepsilon}}) \right),\\
    a_3&=\left(1+o(\frac{1}{w_n^{1-\varepsilon}}) \right),&
    a_4&=-\mathbf{i}\frac{1}{b^{1/4}}\sqrt{w_n} \left(1++o(\frac{1}{w_n^{1-\varepsilon}}) \right)
    \end{align*}
and
\begin{align*}
    A_1 &=\frac{1}{\sqrt{w_n}} o(\frac{1}{w_n^{1/2-\varepsilon}}),&
A_2&=2\mathbf{i}\frac{a^{1/2}}{b^{3/4}\tau_2^{1/2}}\frac{1}{\sqrt{w_n}} \left(1+o(\frac{1}{w_n^{1-\varepsilon}}) \right),\\
A_3 &=o(\frac{1}{w_n^{1-\varepsilon}}),&
A_4&=\mathbf{i}\frac{a^{1/2}}{b\tau_2^{1/2}} \left(1+o(\frac{1}{w_n^{1-\varepsilon}}) \right).
    \end{align*}
    
So, we have the following estimates of $D_1\pm D_4$, $D_2\pm D_3$, $D_1^\prime\pm D_4^\prime$ and $D_2^\prime\pm D_3^\prime$,
\begin{align*}
D_1+D_4 &=-2\mathbf{i}\frac{a^{1/2}}{b\tau_2^{1/2}}\left(1 +o(\frac{1}{w_n^{1/2-\varepsilon}})\right), &
D_1-D_4&=-2\frac{a^{1/2}}{b\tau_2^{1/2}}\left(1+o(\frac{1}{w_n^{1/2-\varepsilon}})\right),&\\
D_2+D_3 &=-(D_1+D_4),&
D_2-D_3&=-2\frac{a^{1/2}}{b\tau_2^{1/2}}\left(1+o(\frac{1}{w_n^{1/2-\varepsilon}})\right).
    \end{align*} 

and
\begin{align*}
D^\prime_1+D^\prime_4 &=4\mathbf{i}\frac{a^{1/2}}{b^{3/4}\tau_2^{1/2}}\frac{1}{\sqrt{w_n}}\left(1 +o(\frac{1}{w_n^{1/2-\varepsilon}})\right),&
D^\prime_1-D^\prime_4 &=-\frac{1}{\sqrt{w_n}}o(\frac{1}{w_n^{1/2-\varepsilon}}),&\\
D^\prime_2+D^\prime_3 &=-4\mathbf{i}\frac{a^{1/2}}{b^{3/4}\tau_2^{1/2}}\frac{1}{\sqrt{w_n}}\left(1 +o(\frac{1}{w_n^{1/2-\varepsilon}})\right),&
D^\prime_2-D^\prime_3&=-\left(D^\prime_1-D^\prime_4 \right).
\end{align*}
Hence, taking in mind that $P\neq 0$ (by Vieta's formula associated to equation (\ref{ec})),
\begin{eqnarray*}
\frac{1}{P} S_{n} &=&-4\mathbf{i}\frac{a^{1/2}}{b\tau_2^{1/2}}\left( \sinh(k)\cos\theta_3+\mathbf{i}\cosh(k)\sin\theta_3\right)  \left(1+o(\frac{1}{w_n^{1/2-\varepsilon}})\right)\cos\theta_2\\
&-&4\mathbf{i}\frac{a^{1/2}}{b\tau_2^{1/2}}\left(\sinh(k)\cos\theta_3+\mathbf{i} \cosh(k)\sin\theta_3\right)  \left(1+o(\frac{1}{w_n^{1/2-\varepsilon}})\right)\sin\theta_2+o(\frac{1}{w_n^{1/2-\varepsilon}})\\
&=&4\mathbf{i}\frac{a^{1/2}}{b\tau_2^{1/2}}\left( \sinh(k)\cos\theta_3+\mathbf{i}\cosh(k)\sin\theta_3\right)  \left(1+o(\frac{1}{w_n^{1/2-\varepsilon}})\right)\left( \cos\theta_2+\sin\theta_2\right)+o(\frac{1}{w_n^{1/2-\varepsilon}})
\end{eqnarray*}
and
\begin{eqnarray*}
\frac{1}{P} T
_{n} =&\frac{1}{\sqrt{ w_n}}\left[ 8\mathbf{i}\frac{a^{1/2}}{b^{3/4}\tau_2^{1/2}}\left( \sinh(k)\cos\theta_3+\mathbf{i}\cosh(k)\sin\theta_3\right)  \left(1 +o(\frac{1}{w_n^{1/2-\varepsilon}})\right)\cos\theta_2+ o(\frac{1}{w_n^{1/2-\varepsilon}})\right]. 
\end{eqnarray*}

We take  $s_n:=\frac{1}{\sqrt{b}}w_n=\sqrt{\frac{\alpha_1}{b}}\gamma_n$.
By using the Asymptotic Dirichlet's theorem \cite{Shm80}, there exists $%
(p_{n},q_{n})\in \mathbb{N}^{2}$ such that $p_n,q_n\rightarrow \infty$ and
\begin{equation*}
\left| \left(\frac{\alpha_1}{b} \right)^{1/4} -\frac{p_n}{q_n}\right| <\frac{1}{q_{n}^2}\label{dir1}
\end{equation*}
which implies
\begin{equation*}
\left| \left(\frac{\alpha_1}{b} \right)^{1/4}q_n-p_n\right| <\frac{1}{q_n}. \label{dir2}
\end{equation*}
One has $\sqrt{\frac{b}{a}}w_n=\frac{b}{\sqrt{a}}s_n$ and $\gamma_n=\sqrt{\frac{b}{\alpha_1}}s_n$.

We choose $$\sqrt{\gamma_n}=q_n+\frac{\alpha}{q_n^2},$$
then
$$\sqrt{s_n}=\left(\frac{\alpha_1}{b} \right)^{1/4}q_n+\left(\frac{\alpha_1}{b} \right)^{1/4}\frac{\alpha}{q_n^2}.$$
Recall that
$$\frac{\theta_2}{\pi}=\sqrt{s_n} +o(\frac{1}{s_n^{1/2-\varepsilon}}), $$
then 
\begin{equation*}
    \sin(\theta_2) = o(\frac{1}{q_n^{1-\varepsilon}})\;\;\;\text{and}\;\;\; 
    \cos(\theta_2) = 1+o(\frac{1}{q_n^{1-\varepsilon}}).
    \end{equation*}
    Moreover,
$$\tan(\gamma_n\pi)=\frac{2\alpha}{q_n}\left( 1+o(\frac{1}{q_n^{1-\varepsilon}})\right). $$    
    Hence  $$\frac{1}{P}S_n=\mathbf{i}\frac{a^{1/2}}{b\tau_2^{1/2}}\left( \sinh(k)\cos\theta_3+\mathbf{i}\cosh(k)\sin\theta_3\right)  \left(1+o(\frac{1}{q_n^{1-\varepsilon}})\right) \cos\theta_2+o(\frac{1}{q_n^{1-\varepsilon}})$$ and
    \begin{eqnarray*}
    &&\frac{1}{P}T_{n}\gamma _{n}\tan (\gamma _{n}\pi )\\
    &&=2\alpha\alpha_1^{-1/4}8\mathbf{i}\frac{a^{1/2}}{b^{3/4}\tau_2^{1/2}}\left( \sinh(k)\cos\theta_3+\mathbf{i}\cosh(k)\sin\theta_3\right)  \left(1 +o(\frac{1}{q_n^{1-\varepsilon}})\right)\cos\theta_2+ o(\frac{1}{q_n^{1-\varepsilon}}).
\end{eqnarray*}      
 
 We choose $\alpha$ such that $S_{n}-\alpha_{1}T_{n}\gamma _{n}\tan (\gamma _{n}\pi )=o(\frac{1}{q_n^{1-\varepsilon}})$ i.e. $\alpha=\frac{\alpha_1^{1/4}}{4b^{1/4}}$, 
to conclude that
$$|\gamma_nc_1|\geq C \gamma_n^{1-\varepsilon}$$ for some positive constant $C$.
   
Thus we have
\begin{equation*}
\frac{1}{w_n^{2\alpha}}\left( \left| -\frac{1}{2\gamma _{n}}+\gamma _{n}c_{1}\right| ^{2}+\left| \gamma
_{n}c_{2}\right| ^{2}\right) \rightarrow +\infty.
\end{equation*}

The proof of Lemma \ref{lem 3.4} is thus complete.
\end{proof}

Finally, taking the real part of the inner product of (\ref{st1}) with $(\pi
-x)\partial _{x}u_{1,n}$ to obtain
\begin{eqnarray*}
&&-\frac{\pi w_n^{\alpha}}{2}\left( \left| -\frac{1}{2\gamma _{n}} +\gamma _{n}c_{1}\right| ^{2}+
\left| \gamma _{n}c_{2}\right| ^{2}\right)\\&&=-\frac{w_n^{\alpha}}{2}\left( \gamma
_{n}^{2}\left\| u_{1,n}\right\| ^{2}+\left\| \partial _{x}u_{1,n}\right\|
^{2}\right) +Re(\int_{0}^{\pi }\alpha _{1}\sin (\gamma _{n}x)(\pi -x)\partial _{x}%
\overline{u_{1,n}}dx).
\end{eqnarray*}
Then, By (\ref{eess}), $%
\gamma_{n}^{2}\left\| u_{1,n}\right\| ^{2}+\left\| \partial _{x}u_{1,n}\right\|
^{2}$ diverges to infinity. In conclusion $y_{n}$ is not bounded.
\end{proof}

\bibliographystyle{amsplain}
\bibliography{mabib_19_11_20}

\providecommand{\bysame}{\leavevmode\hbox to3em{\hrulefill}\thinspace}
\providecommand{\MR}{\relax\ifhmode\unskip\space\fi MR }
\providecommand{\MRhref}[2]{%
  \href{http://www.ams.org/mathscinet-getitem?mr=#1}{#2}
}
\providecommand{\href}[2]{#2}
\begin{thebibliography}{10}

\bibitem{ArBa88}
W.~Arendt and C.J.K. Batty, \emph{Tauberian theorems for one-parameter
  semigroups}, Trans. Amer. Math. Soc. \textbf{306} (1988), 837--852.

\bibitem{Tom10}
A.~Borichev and Y.~Tomilov, \emph{Optimal polynomial decay of functions and
  operator semigroups}, Math. Ann. \textbf{347} (2010), 455--478.

\bibitem{Cat58}
C.~Cattaneo, \emph{A form of heat conduction equation which eliminates the
  paradox of instantaneous propagation}, Comput. Rendus \textbf{247} (1958),
  431--433.

\bibitem{Gea78}
L.~M. Gearhart, \emph{Spectral theory for contraction semigroups on {H}ilbert
  space}, Trans. Amer. Math. Soc., \textbf{236} (1978), 385--394.

\bibitem{Hua85}
F.~L. Huang, \emph{Characteristic condition for exponential stability of linear
  dynamical systems in {H}ilbert spaces}, Ann. Diff. Eqs., \textbf{1} (1985),
  43--56.

\bibitem{Liu99}
Z.~Liu and S.~Zheng, \emph{Semigroups associated with dissipative systems},
  Chapman $\&$ Hall/CRC, 1999.

\bibitem{Mar02}
A.~Marzocchi, J.~E.~M. Rivera, and M.~G. Naso, \emph{Asymptotic behaviour and
  exponential stability for a transmission problem in thermoelasticity}, Math.
  Meth. Appl. Sci., \textbf{25} (2002), 955--980.

\bibitem{MeWa15}
Y.~P. Meng and Y.~G. Wang, \emph{Stability for a nonlinear coupled system of
  elasticity and thermoelasticity with secound sound}, Anal. Appl. (Singap.)
  \textbf{13} (2015), 45--75.

\bibitem{MeSa05}
S.A. Messaoudi and B.~Said-Houari, \emph{Exponential stability in
  one-dimentional non-linear thermoelasticity with second sound}, Math.
  Methods. Appl. Sci. \textbf{28} (2005), 205--232.

\bibitem{Paz83}
A.~Pazy, \emph{Semigroup of linear operators and applications to partial
  differential equations}, Springer, New York, 1983.

\bibitem{Pru84}
J.~Pr\"{u}ss, \emph{On the spectrum of $\mathcal{C}_{0}$-semigroups.}, Trans.
  Amer. Math. Soc., \textbf{284} (1984), 847--857.

\bibitem{Rac02}
R.~Racke, \emph{Thermoelasticity with second sound: exponential stability in
  linear and nonlinear 1-d}, Math. Meth. Appl. Sci., \textbf{25} (2002),
  409--441.

\bibitem{Rac03b}
\bysame, \emph{Asymptotic behavior of solutions in linear 2- or 3-d
  thermoelasticity with second sound}, Quart. Appl. Math. \textbf{61} (2003),
  315--328.

\bibitem{Rac08}
R.~Racke, J.~E.~M. Rivera, and H.~F. Sare, \emph{Stability for a transmission
  problem in thermoelasticity with second sound}, Journal of Thermal Stresses,
  \textbf{31} (2008), 1170--1189.

\bibitem{RiWa20}
M.~Rissel and Y.~G. Wang, \emph{Remarks on exponential stability for a coupled
  system of elasticity and thermoelasticity with second sound}, J. Evol. Equ.
  \textbf{21} (2021), 1573--1599.

\bibitem{Riv01}
J.~E.~M. Rivera and H.~P. Oquendo, \emph{The transmission problem for
  thermoelastic beams}, J. Thermal Stresses \textbf{24} (2001), 1137--1158.

\bibitem{SaRa09}
H.D.F. Sare and R.~Racke, \emph{On the stability of damped timoshenko systems:
  Cattaneo versus fourier law}, Arch. Ration. Mech. Anal. \textbf{194} (2009),
  212--251.

\bibitem{SaRi12}
H.D.F. Sare and J.E.M. Rivera, \emph{Optimal rates of decay in 2-d
  thermoelasticity with second sound}, J. Math. Phys. \textbf{53} (2012),
  073509.

\bibitem{SRR08}
H.D.F Sare, J.M. Rivera, and R.~Racke, \emph{Stability for a transmission
  problem in thermoelasticity with second sound}, Journal of Thermal Stresses
  \textbf{31} (2008), 1170--1189.

\bibitem{Shm80}
W.~M. Schmidt, \emph{Diophantine approximation}, 785, Lecture Notes in
  Mathematics, Springer-Verlag, Berlin-Heidelberg-New York, 1980.

\bibitem{She13}
F.~Shel, \emph{Exponential stability of a network of elastic and thermoelastic
  materials}, Math. Meth. in Appl. Sci., \textbf{36} (2013), 869--879.

\bibitem{She20}
\bysame, \emph{Thermoelastic stability of a composite material}, Journal of
  Differential Equations \textbf{269} (2020), 9348--9383.

\bibitem{Ver58}
P.~Vernotte, \emph{Les paradox de la th\'{e}orie continue de l'\'{e}quation de
  la chaleur}, C. R. Hebd. Seances Acad. Sci. \textbf{246} (1958), 3154--3155.

\end{thebibliography}

\end{document}